\newtheorem{teor}{Theorem}[section]
\newtheorem{defi}{Definition}
\newtheorem{lema}[teor]{Lemma}
\newtheorem{prop}[teor]{Proposition}
\newtheorem{cor}[teor]{Corollary}
\newtheorem{rem}[teor]{Remark}
\newtheorem{ejems}[teor]{Examples}
\newtheorem{setting}[teor]{Setting}
\begin{document}

\title{On locally coherent hearts}

\author{Manuel Saorín \thanks{This work is backed by research projects from the Ministerio de Econom\'ia y Competitividad of Spain (MTM201346837-P) and the Fundaci\'on 'S\'eneca' of Murcia (19880/GERM/15), both with a part of FEDER funds. The author thanks these institutions for their support. He also thanks Carlos Parra and the referee for their comments and the careful reading of the paper.}\\ Departamento de Matemáticas\\
Universidad de Murcia, Aptdo. 4021\\
30100 Espinardo, Murcia\\
SPAIN\\ {\it msaorinc@um.es}}

\date{}

\maketitle


\begin{abstract}

{\bf Let $\mathcal{G}$ be a locally coherent Grothendieck category. We show that, under particular conditions, if a t-structure $\tau$ in the unbounded derived category $\mathcal{D}(\mathcal{G})$ restricts to the bounded derived category $\mathcal{D}^b(fp(\mathcal{G}))$ of its category of finitely presented (=coherent) objects, then its heart $\mathcal{H}_\tau$  is a locally coherent Grothendieck category on which $\mathcal{H}_\tau\cap\mathcal{D}^b(fp(\mathcal{G}))$ is the class of finitely presented objects. Those particular conditions are always satisfied when $\mathcal{G}$ is arbitrary and $\tau$ is the Happel-Reiten-Smalo t-structure in $\mathcal{D}(\mathcal{G})$ associated to a torsion pair in $fp(\mathcal{G})$ or when $\mathcal{G}=\text{Qcoh}(\mathbb{X})$ is the category of quasicoherent sheaves on a noetherian affine scheme $\mathbb{X}$  and $\tau$ is any compactly generated t-structure in $\mathcal{D}(\mathbb{X}):=\mathcal{D}(\text{Qcoh}(\mathbb{X}))$ which restrict to $\mathcal{D}^b(\mathbb{X}):=\mathcal{D}^b(\text{coh}(\mathbb{X}))$. In particular, the heart of any t-structure in $\mathcal{D}^b(\mathbb{X})$ is the category of finitely presented objects of a locally coherent Grothendieck category.  
 }
\end{abstract}

{\bf Mathematics Subjects Classification:} 18E30, 18E15, 13DXX, 14XX, 16EXX.

\section{Introduction}

Beilinson, Bernstein and Deligne \cite{BBD} introduced the notion of
a t-structure in a triangulated category in their study of perverse
sheaves on an algebraic or analytic variety. If $\mathcal{D}$ is
such a triangulated category, a t-structure is a pair of full
subcategories satisfying some axioms  which guarantee that their intersection is an
abelian category $\mathcal{H}$, called the heart of the t-structure.
This category  comes with a cohomological functor
$\mathcal{D}\longrightarrow\mathcal{H}$. Roughly speaking, a
t-structure allows to develop an intrinsic (co)homology theory,
where the homology 'spaces' are again objects of $\mathcal{D}$
itself.

T-structures are nowadays used in several branches of Mathematics, with special impact in Algebraic Geometry, Homotopical Algebra and Representation Theory of Groups and Algebras. When dealing with t-structures, a natural question arises. It asks under which conditions the heart of a given t-structure is a 'nice' abelian category. Using a classical hierarchy for abelian categories introduced by Grothendieck, one may think of Grothendieck and module categories as the nicest possible abelian categories. It is then not surprising that   the question of when the heart of a t-structure is a Grothendieck or module category deserved much attention in recent times (see, e.g.,  \cite{HKM}, \cite{CGM}, \cite{CG}, \cite{CMT}, \cite{MT}, \cite{PS1}, \cite{PS2}, \cite{PS4}, \cite{PV}, \cite{NSZ}). 

Among Grothendieck categories, the most studied ones are those that have  finiteness conditions (e.g. those which are locally coherent, locally noetherian or even locally finite). Module categories over a noetherian or coherent rings or over Artin algebras, or the categories of quasi-coherent sheaves over coherent or noetherian schemes provide examples of such categories.  A natural subsequent question would ask when a given t-structure has a heart which is a Grothendieck category with good finiteness conditions. In this paper, we tackle the question for the locally coherent condition,  assuming that the t-structure lives in the (unbounded) derived category $\mathcal{D}(\mathcal{G})$ of a Grothendieck category $\mathcal{G}$ which is itself locally coherent. Although to find a general answer seems to be hopeless, it is not so when the t-structure restricts to $\mathcal{D}^b(\text{fp}(\mathcal{G}))$, the bounded derived category of the category of finitely presented (=coherent) objects. Our basic technical result in the paper, Proposition \ref{prop.locally coherent Grothendieck heart},  gives a precise list of sufficient conditions on a t-structure in $\mathcal{D}(\mathcal{G})$ so that its heart $\mathcal{H}$ is a locally coherent Grothendieck category on which $\mathcal{H}\cap\mathcal{D}^b(\text{fp}(\mathcal{G}))$ is the class of its finitely presented objects.  As an appication, we get the  main results of the paper, referring the reader to next section for the notation and terminology used:

\begin{enumerate}
\item (Theorem \ref{teor.HRS}) Let $\mathcal{G}$ be a locally coherent Grothendieck category and $\mathbf{t}=(\mathcal{T},\mathcal{F})$ be a torsion pair in $\mathcal{G}$. The associated Happel-Reiten-Smal$\emptyset$ t-structure in $\mathcal{D}(\mathcal{G})$ restricts to $\mathcal{D}^b(\text{fp}(\mathcal{G}))$ and has a heart which is locally coherent Grothendieck category if, and only if, $\mathcal{F}$ is closed under taking direct limits in $\mathcal{G}$ and $\mathbf{t}$ restricts to $\text{fp}(\mathcal{G})$. 

\item (Theorem \ref{teor.main}) If $R$ is a commutative noetherian ring, then any compactly generated t-structure in $\mathcal{D}(R)$ which restricts to $\mathcal{D}^b_{fg}(R)\cong \mathcal{D}^b(R-\text{mod})$  has a heart $\mathcal{H}$ which is a locally coherent Grothendieck category on which $\mathcal{H}\cap\mathcal{D}^b_{fg}(R)$ is the class of its finitely presented objects. 

\item (Corollary \ref{cor.realisation}) If $R$ is a commutative noetherian ring, then the heart of each t-structure in $\mathcal{D}_{fg}^b(R)$ is equivalent to the category of finitely presented objects of some locally coherent Grothendieck category.  
\end{enumerate}
Of course, when taking the affine scheme $\mathbb{X}=\text{Spec}(R)$ in 2 and 3, one obtains the geometric versions mentioned in the abstract (see also Corollary \ref{cor.geometric translation}). 

The organization of the paper goes as follows. Section 2 introduces all the concepts and terminology used in the paper. In Section 3 we give some general results about locally coherent Grothendieck categories which are used later. Section 4 contains the  technical Proposition \ref{prop.locally coherent Grothendieck heart}, which is basic for the paper, and a few auxiliary results needed for its proof. Section 5 is dedicated to the Happel-Reiten-Smal$\emptyset$ t-structure and the proof of Theorem \ref{teor.HRS}. The final Section 6 gives Theorem \ref{teor.main}, of which Corollary \ref{cor.realisation} is a direct consequence, and two lemmas needed for its proof.

\section{Preliminaries and terminology}

All categories in this paper will be additive and all rings will be supposed to be associative with unit, unless otherwise specified. Whenever the term 'module' is used over a noncommutative ring, it will mean 'left module' and, for a given ring $R$, we will denote by $R-\text{Mod}$ the category of all $R$-modules.  Let $\mathcal{A}$ be  an additive category in the rest of the paragraph. If $\mathcal{C}$ is any class of objects in $\mathcal{A}$, the symbol $\mathcal{C}^\perp$ (resp. ${}^\perp\mathcal{C}$) will denote the full subcategory of $\mathcal{A}$ whose objects are those $X\in\text{Ob}(\mathcal{A})$ such that $\text{Hom}_\mathcal{A}(C,X)=0$ (resp. $\text{Hom}_\mathcal{A}(X,C)=0$), for all $C\in\mathcal{C}$.  The expression '$\mathcal{A}$ has  products (resp. coproducts)' will mean that $\mathcal{A}$  has arbitrary set-indexed products (resp. coproducts). When $\mathcal{S}$ is a set of objects in $\mathcal{A}$, we shall denote by $\text{sum}(\mathcal{S})$ (resp. $\text{add}(\mathcal{S})$) the class of objects which are isomorphic to  a finite coproduct (resp. a direct summand of a finite coproduct) of objects of $\mathcal{S}$. When $\mathcal{A}$ has  coproducts, we shall say that an object $X$ is a
\emph{compact (or small) object} when the functor
$\text{Hom}_\mathcal{A}(X,?):\mathcal{A}\longrightarrow\text{Ab}$
preserves coproducts. 

Two types of additive categories will get most of our interest in this paper. The first one is that of \emph{abelian categories} (see \cite{Po}) and the second one is that of \emph{triangulated categories} (see \cite{N}). Diverting from the terminology in this latter reference, for a triangulated category $\mathcal{D}$, the shift or suspension functor  will be denoted by $?[1]$, putting $?[k]$ for its $k$-th power, for each $k\in\mathbb{Z}$. We shall use the term \emph{class (resp. set) of generators} with two different meanings, depending on whether we are in the abelian or the triangulated context. When $\mathcal{A}$ is an abelian category with coproducts, a class (resp. set) of generators $\mathcal{S}$ is a class (resp. set) of objects such that each object in $\mathcal{A}$ is an epimorphic image of a coproduct of objects in $\mathcal{S}$. When $\mathcal{S}$ is a class (resp. set) of objects in the triangulated category $\mathcal{D}$, we shall say that it is a class (resp. set) of generators in case an object $X$ of $\mathcal{D}$ is zero exactly when $\text{Hom}_\mathcal{D}(S[k],X)=0$, for all $S\in\mathcal{S}$ and all $k\in\mathbb{Z}$. 

Given a triangulated category $\mathcal{D}$, a subcategory $\mathcal{E}$ will be called a \emph{triangulated subcategory} when it is closed under taking extensions and $\mathcal{E}[1]=\mathcal{E}$. If, in addition, it is closed under taking direct summands, we will say that $\mathcal{E}$  is a \emph{thick subcategory} of $\mathcal{D}$. When $\mathcal{S}$ is a set of objects of $\mathcal{D}$, we shall denote by $\text{tria}_\mathcal{D}(\mathcal{S})$ (resp. $\text{thick}_\mathcal{D}(\mathcal{S})$) the smallest triangulated (resp. thick) subcategory of $\mathcal{D}$ which contains $\mathcal{S}$. 

For an additive  category $\mathcal{A}$, we will denote by
$\mathcal{C}(\mathcal{A})$ and  $\mathcal{K}(\mathcal{A})$  the category of chain complexes of
objects of $\mathcal{A}$ and the homotopy category of $\mathcal{A}$. Diverting from the classical notation, we will write superindices for chains, cycles and boundaries in ascending order.  We will denote by $\mathcal{C}^-(\mathcal{A})$ (resp. $\mathcal{K}^-(\mathcal{A})$), $\mathcal{C}^+(A)$ resp. $\mathcal{K}^+(\mathcal{A})$) and $\mathcal{C}^b(\mathcal{A})$ (resp. $\mathcal{K}^b(\mathcal{A})$) the full subcategories of $\mathcal{C}(\mathcal{A})$ (resp. $\mathcal{K}(\mathcal{A})$) consisting of those objects isomorphic to upper bounded, lower bounded and (upper and lower) bounded complexes, respectively. Note that $\mathcal{K}(\mathcal{A})$ is always a triangulated category of which $\mathcal{K}^-(\mathcal{A})$, $\mathcal{K}^+(\mathcal{A})$ and $\mathcal{K}^b(\mathcal{A})$ are triangulated subcategories. When $\mathcal{A}$ is an abelian category, we will denote by
$\mathcal{D}(\mathcal{A})$ its \emph{derived category}, which is the one obtained from $\mathcal{C}(\mathcal{A})$ by (keeping the same objects and) formally inverting the quasi-isomorphisms (see \cite{V} for the details). We shall denote by $\mathcal{D}^-(\mathcal{A})$ (resp. $\mathcal{D}^+(\mathcal{A})$, resp. $\mathcal{D}^b(\mathcal{A})$) the full subcategory of $\mathcal{D}(\mathcal{A})$ consisting of those complexes $X$ such that $H^k(X)=0$, for all $k\gg 0$ (resp. $k\ll 0$, resp. $|k| \gg 0$), where $H^k:\mathcal{D}(\mathcal{A})\longrightarrow\mathcal{A}$ denotes the $k$-th homology functor, for each $k\in\mathbb{Z}$. The objects of $\mathcal{D}^{-}(\mathcal{A})$ (resp. $\mathcal{D}^{+}(\mathcal{A})$, resp. $\mathcal{D}^{b}(\mathcal{A})$) will be called \emph{homologically upper bounded (resp. homologically lower bounded, resp. homologically bounded) complexes}. For integers $m\leq n$, we will denote by $\mathcal{D}^{[m,n]}(\mathcal{A})$  the full subcategory of $\mathcal{D}(\mathcal{A})$ consisting of the complexes $X$ such that $H^k(X)=0$ for integers $k$ not in the closed interval $[m,n]$. We will also use $\mathcal{D}^{\leq n}(\mathcal{A})$ (resp. $\mathcal{D}^{< n}(\mathcal{A})$) and $\mathcal{D}^{\geq n}(\mathcal{A})$ (resp. $\mathcal{D}^{> n}(\mathcal{A})$) to denote the full subcategories consisting of the complexes $X$ such that $H^i(X)=0$, for all $i>n$ (resp. $i\geq n$) and all $i<n$ (resp. $i\leq n$), respectively. 

Strictly speaking, for a general abelian category $\mathcal{A}$, the category $\mathcal{D}(\mathcal{A})$ need not exist since the morphisms between two given objects could form a proper class and not just a set. However, this problem disappears when $\mathcal{A}=\mathcal{G}$ is a \emph{Grothendieck category}. This is a cocomplete abelian category with a set of generators on which direct limits are exact. In  a Grothendieck category $\mathcal{G}$ an object $S$ is called \emph{finitely presented} when $\text{Hom}_\mathcal{G}(S,?):\mathcal{G}\longrightarrow\text{Ab}$ preserves direct limits. We say that $\mathcal{G}$ is  \emph{locally finitely presented} when it has a set of finitely presented generators. The reader is referred to \cite{CB} for the corresponding more general concept of locally finitely presented additive categories with direct limits and is invited  to check by her/himself that, in the case of Grothendieck categories, it coincides with the one given here. Recall that an object in a Grothendieck category is called \emph{noetherian} when it satisfies the ascending chain condition on subobjects. A \emph{locally noetherian} Grothendieck category is a Grothendieck category which has a set of noetherian generators. When $\mathcal{G}$ is locally finitely presented and locally noetherian,  an object $N$ of $\mathcal{G}$ is noetherian if, and only if, it is finitely presented (see \cite[Proposition A.11]{Kr} for one direction, the reverse one being obvious since each noetherian object in such a category is an epimorphic image of a finitely presented one and the kernel of this epimorphism is again noetherian). 

Recall that if $\mathcal{D}$ and $\mathcal{A}$ are a triangulated
and an abelian category, respectively, then an additive  functor
$H:\mathcal{D}\longrightarrow\mathcal{A}$ is a \emph{cohomological
functor} when, given any triangle $X\longrightarrow Y\longrightarrow
Z\stackrel{+}{\longrightarrow}$, one gets an induced long exact
sequence in $\mathcal{A}$:

\begin{center}
$\cdots \longrightarrow H^{n-1}(Z)\longrightarrow H^n(X)\longrightarrow
H^n(Y)\longrightarrow H^n(Z)\longrightarrow
H^{n+1}(X)\longrightarrow \cdots$,
\end{center}
where $H^n:=H\circ (?[n])$, for each $n\in\mathbb{Z}$. 

A \emph{torsion pair} in the abelian category $\mathcal{A}$ is a pair $\mathbf{t}=(\mathcal{T},\mathcal{F})$ of full subcategories such that $\text{Hom}_\mathcal{A}(T,F)=0$, for all $T\in\mathcal{T}$ and $F\in\mathcal{F}$, and each object $X$ of $\mathcal{A}$ fits into an exact sequence $0\rightarrow T_X\longrightarrow X\longrightarrow F_X\rightarrow 0$, where $T_X\in\mathcal{T}$ and $F_X\in\mathcal{F}$. In this latter case the assignments $X\rightsquigarrow T_X$ and $X\rightsquigarrow F_X$ extend to endofunctors $t, (1:t):\mathcal{A}\longrightarrow\mathcal{A}$. The functor $t$ is usually called the \emph{torsion radical} associated to $\mathbf{t}$. The torsion pair $\mathbf{t}$ will be called \emph{hereditary} when $\mathcal{T}$ is closed under taking subobjects in $\mathcal{A}$.

Let now $\mathcal{D}$ be a triangulated category. A
\emph{t-structure} in $\mathcal{D}$ (see \cite[Section 1]{BBD}) is a pair
$\tau =(\mathcal{U},\mathcal{W})$ of full subcategories, closed under
taking direct summands in $\mathcal{D}$,  which satisfy the
 following  properties:

\begin{enumerate}
\item[i)] $\text{Hom}_\mathcal{D}(U,W[-1])=0$, for all
$U\in\mathcal{U}$ and $W\in\mathcal{W}$;
\item[ii)] $\mathcal{U}[1]\subseteq\mathcal{U}$;
\item[iii)] For each $X\in Ob(\mathcal{D})$, there is a triangle $U\longrightarrow X\longrightarrow
V\stackrel{+}{\longrightarrow}$ in $\mathcal{D}$, where
$U\in\mathcal{U}$ and $V\in\mathcal{W}[-1]$.
\end{enumerate}
In this case $\mathcal{W}=\mathcal{U}^\perp
[1]$ and $\mathcal{U}={}^\perp (\mathcal{W}[-1])={}^\perp
(\mathcal{U}^\perp )$ and,  for this reason, we will write a t-structure
as $\tau =(\mathcal{U},\mathcal{U}^\perp [1])$. We will call $\mathcal{U}$
and $\mathcal{U}^\perp$ the \emph{aisle} and the \emph{co-aisle} of
the t-structure. The objects $U$ and $V$ in the above
triangle are uniquely determined by $X$, up to isomorphism, and
define functors
$\tau_\mathcal{U}:\mathcal{D}\longrightarrow\mathcal{U}$ and
$\tau^{\mathcal{U}^\perp}:\mathcal{D}\longrightarrow\mathcal{U}^\perp$
which are right and left adjoints to the respective inclusion
functors. We call them the \emph{left and right truncation functors}
with respect to the given t-structure.  The full subcategory
$\mathcal{H}=\mathcal{U}\cap\mathcal{W}=\mathcal{U}\cap\mathcal{U}^\perp
[1]$ is called the \emph{heart} of the t-structure and it is an
abelian category, where the short exact sequences 'are' the
triangles in $\mathcal{D}$ with its three terms in $\mathcal{H}$.
Moreover, with the obvious abuse of notation,  the assignments
$X\rightsquigarrow (\tau_{\mathcal{U}}\circ\tau^{\mathcal{U}^\perp
[1]})(X)$ and $X\rightarrow (\tau^{\mathcal{U}^\perp
[1]}\circ\tau_\mathcal{U})(X)$ define   naturally isomorphic
functors $\mathcal{D}\longrightarrow\mathcal{H}$ whih are
cohomological (see \cite{BBD}). We will identify them and denote the corresponding functor by $\tilde{H}$. When $\mathcal{D}$ has coproducts, the t-structure $\tau$ will be called \emph{compactly generated} when there is a set $\mathcal{S}\subseteq\mathcal{U}$, formed by compact objects in $\mathcal{D}$, such that $\mathcal{W}[-1]=\mathcal{U}^\perp$ consists of the objects $Y$ such that $\text{Hom}_\mathcal{D}(S[k],Y)=0$, for all $S\in\mathcal{S}$ and integers $k\geq 0$. 

When $\mathcal{D}$ is a triangulated category with coproducts,  we will use the term \emph{Milnor colimit} of a sequence of morphisms $X_0\stackrel{x_1}{\longrightarrow}X_1\stackrel{x_2}{\longrightarrow}\cdots \stackrel{x_n}{\longrightarrow}X_n\stackrel{x_{n+1}}{\longrightarrow} \cdots $ what in \cite{N} is called homotopy colimit. It will be denoted $\text{Mcolim}(X_n)$, without reference to the $x_n$.

\section{Generalities about  locally coherent Grothendieck categories}

In this section we are interested in a particular case of locally finitely presented Grothendieck categories. Let us start by the following result which is folklore.

\begin{lema} \label{lem.abelian exact subcategory}
Let $\mathcal{A}$ be an abelian category and $\mathcal{B}$ be a full additive subcategory. The following assertions are equivalent:

\begin{enumerate}
\item $\mathcal{B}$ is an abelian category such that the inclusion functor $\mathcal{B}\hookrightarrow\mathcal{A}$ is exact;
\item $\mathcal{B}$ is closed under taking finite (co)products, kernels and cokernels in $\mathcal{A}$.
\end{enumerate}
In this case  we will say that $\mathcal{B}$ is an \emph{abelian exact subcategory} of $\mathcal{A}$.
\end{lema}

Note that if $\mathcal{G}$ is a locally finitely presented Grothendieck category, then the class $fp(\mathcal{G})$ of finitely presented objects is skeletally small and is closed under taking cokernels and finite coproducts. 

\begin{defi} \label{def.locally coherent Grothendieck}
A Grothendieck category $\mathcal{G}$ is called \emph{locally coherent} when it is locally finitely presented and the subcategory $fp(\mathcal{G})$  is an abelian exact subcategory of $\mathcal{G}$ (equivalently, when $fp(\mathcal{G})$ is closed under taking kernels). 
\end{defi}

Recall that a \emph{pseudo-kernel} (resp. \emph{pseudo-cokernel}) of a morphism $f:X\longrightarrow Y$ in the additive category $\mathcal{A}$ is a morphism $u:Z\longrightarrow X$ (resp. $v:Y\longrightarrow Z$) such that the sequence of contravariant (resp. covariant) functor $\text{Hom}_\mathcal{A}(?,Z)\stackrel{u_*}{\longrightarrow}\text{Hom}_\mathcal{A}(?,X)\stackrel{f_*}{\longrightarrow}\text{Hom}_\mathcal{A}(?,Y)$ is exact (resp. $\text{Hom}_\mathcal{A}(Z,?)\stackrel{v^*}{\longrightarrow}\text{Hom}_\mathcal{A}(Y,?)\stackrel{f_*}{\longrightarrow}\text{Hom}_\mathcal{A}(X,?)$) is exact. We say that $\mathcal{A}$ \emph{has pseudo-kernels (resp. pseudo-cokernels)} when each morphism in $\mathcal{A}$ has a pseudo-kernel (resp. pseudo-cokernel). 

\begin{ejems}
The following are examples of locally coherent Grothendieck categories to which the results in this and next section apply:

\begin{enumerate}
\item $R-\text{Mod}$, when $R$ is a left coherent ring $R$ (i.e. when each finitely generated left ideal of $R$ is finitely presented).
\item The category $[\mathcal{C},Ab]$  (resp. $[\mathcal{C}^{op},Ab]$) of covariant (resp. contravariant)   additive functors $\mathcal{C}\longrightarrow Ab$, where $\mathcal{C}$ is a (skeletally) small additive category with pseudo-cokernels (resp. pseudo-kernels). In particular, when $\mathcal{C}$ is a skeletally small abelian or triangulated category, both  $[\mathcal{C},Ab]$  and $[\mathcal{C}^{op},Ab]$ are locally coherent Grothendieck categories.  
\item The category $\text{Qcoh}(\mathbb{X})$ of quasi-coherent sheaves, where $\mathbb{X}$ is a \emph{coherent scheme}, i.e., a quasi-compact and quasi-separated scheme admitting a covering $\mathbb{X}=\bigcup_{i\in I}U_i$ by affine open subschemes $U_i$ such that $U_i=\text{Spec}(A_i)$, for a commutative coherent ring $A_i$, for each $i\in I$. 
\item Any locally noetherian and locally finitely presented Grothendieck category.
\end{enumerate}
\end{ejems}
\begin{proof}
Example 1 is well-known and the covariant version of example 2 follows from \cite[Propositions 1.3 and 2.1]{He}, taking into account that, in Herzog's proposition 2.1, the proof that each representable functor $(X,?)$ is a coherent object only requires that each morphism $X\longrightarrow Y$ has a pseudo-cokernel. The contravariant version of assertion 2 follows by duality. For example 3, see \cite[Proposition 40]{Ga} (see  also \cite[Example 1.1.6.iv]{Si}). Finally, example 4 is clear since $fp(\mathcal{G})$ coincides with the class of noetherian objects in that case, and this latter class is always closed under taking kernels (even subobjects).  
\end{proof}

\begin{lema} \label{lem.bounded f.p. homology}
Let $\mathcal{G}$ be a locally coherent Grothendieck category, let $\mathcal{S}$ be a set of finitely presented generators of $\mathcal{G}$ and let $M$ be any object in $\mathcal{D}(\mathcal{G})$. The following assertions hold:

\begin{enumerate}
\item $M$ is a homologically upper bounded complex whose homology objects are finitely presented if, and only if, $M$ is isomorphic in $\mathcal{D}(\mathcal{G})$ to an upper bounded complex $N$ of objects in $\text{sum}(\mathcal{S})$. Moreover, $N$ can be chosen so that $\text{max}\{i\in\mathbb{Z}\text{: }N^i\neq 0\}=\text{max}\{i\in\mathbb{Z}\text{: }H^i(M)\neq 0\}$.
\item $M$ is homologically bounded and its homology objects are finitely presented if, and only if, $M$ is isomorphic in $\mathcal{D}(\mathcal{G})$ to a bounded complex 
$$\cdots 0\longrightarrow N^{m}\longrightarrow N^{m+1}\longrightarrow N^{n-1}\longrightarrow N^n\longrightarrow 0\cdots ,$$
where the $N^i$ are finitely presented objects (and $N^i\in\text{sum}(\mathcal{S})$, for $m<i\leq n$).

If, moreover,  the objects of $\mathcal{S}$ form a set of compact generators of $\mathcal{D}(\mathcal{G})$, then also the following assertion holds:

\item The compact objects of $\mathcal{D}(\mathcal{G})$ are those isomorphic to direct summands of  bounded complexes of objects in $\text{add}(\mathcal{S})$.
\end{enumerate}
\end{lema}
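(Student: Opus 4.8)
The plan is to prove the three assertions in turn, deriving (2) from (1) by a truncation and (3) from (1) together with compactness, so that the essential work lies in the ``$\Rightarrow$'' direction of (1), namely the construction of a $\text{sum}(\mathcal{S})$-resolution. For the easy ``$\Leftarrow$'' directions of (1) and (2) I would only note that every object of $\text{sum}(\mathcal{S})$ is finitely presented and that, by Definition \ref{def.locally coherent Grothendieck}, $fp(\mathcal{G})$ is an abelian exact subcategory of $\mathcal{G}$. Hence the homology objects of any (upper) bounded complex of finitely presented objects, being subquotients computed by kernels and cokernels \emph{inside} $fp(\mathcal{G})$, are again finitely presented, while the (upper) boundedness of the homology is immediate from that of the complex.

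The heart of the matter is the ``$\Rightarrow$'' direction of (1). Suppose $M$ is homologically upper bounded with finitely presented homology and put $n=\max\{i:H^i(M)\neq 0\}$. After replacing $M$ by its good truncation $\tau^{\leq n}M$, to which it is quasi-isomorphic, I may assume $M^i=0$ for $i>n$. I would then build a quasi-isomorphism $N\to M$ with $N^i\in\text{sum}(\mathcal{S})$ and $N^i=0$ for $i>n$ by descending induction on the degree, arranging that the mapping cone becomes acyclic in higher and higher degrees as the construction proceeds. The point that makes each inductive step possible is that $\mathcal{S}$ is a set of \emph{finitely presented} generators: every finitely presented object is finitely generated, hence an epimorphic image of a finite coproduct of objects of $\mathcal{S}$, i.e.\ of an object of $\text{sum}(\mathcal{S})$. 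This is where I expect the main obstacle to be, and where coherence is indispensable: one must verify that the object to be ``covered'' at each stage, a suitable syzygy or homology object of the partially built cone, is again finitely presented, which holds precisely because $fp(\mathcal{G})$ is closed under kernels and cokernels. Since one may always cover by an object of $\text{sum}(\mathcal{S})$ rather than an infinite coproduct, the induction yields the desired complex with top degree exactly $n$, which gives the ``moreover'' clause.

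Assertion (2) would then follow from (1) by a truncation at the bottom. Writing $m=\min\{i:H^i(M)\neq 0\}$ and applying (1), I obtain $M\cong N'$ with $N'$ upper bounded, $N'^i\in\text{sum}(\mathcal{S})$ and $N'^i=0$ for $i>n$. Replacing $N'$ by its good truncation $\tau^{\geq m}N'$, which is quasi-isomorphic to $N'$ because $H^{<m}(N')=0$, I get a complex supported in degrees $[m,n]$ whose terms in degrees $>m$ are the original objects of $\text{sum}(\mathcal{S})$ and whose term in degree $m$ is the cokernel of a morphism between finitely presented objects, hence finitely presented. This is exactly the asserted shape.

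Finally, for (3) I would first dispose of the easy inclusion: objects of $\text{add}(\mathcal{S})$ are compact, the compact objects form a thick subcategory of $\mathcal{D}(\mathcal{G})$, and a bounded complex is a finite iterated extension of shifts of its terms; hence every direct summand of a bounded complex of objects in $\text{add}(\mathcal{S})$ is compact. For the converse, let $M$ be compact. By Neeman's theorem (see \cite{N}), $M$ lies in $\text{thick}_{\mathcal{D}(\mathcal{G})}(\mathcal{S})$, and a d\'evissage along cones, shifts and summands shows that every object of this thick subcategory is homologically bounded with finitely presented homology; thus (1) applies and gives $M\cong P$ with $P$ an upper bounded complex of objects in $\text{sum}(\mathcal{S})$. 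I would then exhibit $P$ as the Milnor colimit $\text{Mcolim}(\sigma^{\geq -i}P)$ of the sequence of its brutal truncations, each of which is a \emph{bounded} complex of objects in $\text{sum}(\mathcal{S})$. Since $M$ is compact, the standard identity $\text{Hom}_{\mathcal{D}(\mathcal{G})}(M,\text{Mcolim}(-))\cong\varinjlim\text{Hom}_{\mathcal{D}(\mathcal{G})}(M,-)$ lets me represent the isomorphism $M\cong\text{Mcolim}(\sigma^{\geq -i}P)$ by a morphism $M\to\sigma^{\geq -i}P$ splitting the canonical map for some $i$, which exhibits $M$ as a direct summand of the bounded complex $\sigma^{\geq -i}P$ of objects in $\text{sum}(\mathcal{S})\subseteq\text{add}(\mathcal{S})$. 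This last factorization is the only delicate point in part (3).
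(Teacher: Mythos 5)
Your treatment of assertions (2) and (3) is correct and essentially coincides with the paper's: for (2) the paper also truncates at the bottom, taking the boundary object $B^m$ (finitely presented because $Z^m$ is, by coherence) as the lowest term, which is the same content as your cokernel truncation $\tau^{\geq m}N'$; for (3) the paper likewise uses the description of compacts via \cite[Theorem 5.3]{Ke} to get bounded finitely presented homology, applies assertion (1), writes $P$ as the Milnor colimit of its brutal truncations $\sigma^{\geq -n}P$, and factors $1_P$ through some $\sigma^{\geq -n}P$ by compactness. The easy directions of (1) and (2) are also fine.

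The gap is in the inductive step of (1), which you correctly identify as the heart of the matter but justify incorrectly. You assert that ``the object to be covered at each stage, a suitable syzygy or homology object of the partially built cone, is again finitely presented.'' It is not: to extend the partial complex you must produce $N^{-n}\in\text{sum}(\mathcal{S})$ together with \emph{both} a differential and a component of the chain map to $M$, i.e.\ a morphism into the cycle object of the cone (equivalently into the pullback $N^{-n+1}\times_{M^{-n+1}}M^{-n}$), and this object involves a term of $M$, an arbitrary object of $\mathcal{G}$, hence is not finitely presented. Only the homology to be killed is finitely presented (that much does follow from coherence), but since objects of $\text{sum}(\mathcal{S})$ are not projective in $\mathcal{G}$, an epimorphism $Q\twoheadrightarrow H$ with $Q\in\text{sum}(\mathcal{S})$ cannot be lifted along the (non-split) epimorphism from the cycles onto $H$; so ``every finitely presented object is an epimorphic image of an object of $\text{sum}(\mathcal{S})$'' does not by itself make the step go through. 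This is precisely why the paper warns that the hypotheses of the dual of \cite[Chapter I, Lemma 4.6(3)]{Har} fail in this situation. The missing idea is an approximation argument: the paper writes the offending term $M_{n-1}^{-n}$ as a filtered colimit $\varinjlim Y_i$ of finitely presented objects, passes to a cofinal subsystem so that each $Y_i\to B^{-n+1}$ is epic (using that $B^{-n+1}$ is finitely presented by coherence), uses that $H^{-n}$ is finitely presented to choose an index $j$ for which $u_j^{-1}(Z^{-n})\to H^{-n}$ is also epic, and only then covers $Y_j$ by an object of $\text{sum}(\mathcal{S})$, correcting the complex in degrees $-n$ and $-n-1$ by a bicartesian diagram and finally taking the inverse limit of the resulting tower of quasi-isomorphisms. (In your cone formulation the analogous repair is to cover the cycle object of the cone by a possibly infinite coproduct of generators and cut down to a finite subcoproduct using finite generation of the finitely presented homology.) Without some such step, the descending induction as you describe it cannot be carried out.
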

\begin{proof}
We will frequently use the fact that if $M$ is a complex whose homology objects are all finitely presented, then a given $k$-cycle object $Z^k=Z^k(M)$ is finitely presented if and only if so is the $k$-boundary object $B^k=B^k(M)$. 

1) The proof of this assertion is reminiscent of the dual of the proof of Lemma 4.6(3) in \cite[Chapter I]{Har}, with $\mathcal{A}'=fp(\mathcal{G})$ and $\mathcal{A}=\mathcal{G}$, although the assumptions of that lemma do not hold in our situation. By truncating at the greatest integer $i$ such that $H^i(M)\neq 0$ and shifting if necessary,  we can assume without loss of generality that $M$ is concentrated in degrees $\leq 0$ and that $H^0(M)\neq 0$. We then inductively construct a sequence in $\mathcal{C}(\mathcal{G})$  $$...M_n\stackrel{f_n}{\longrightarrow}M_{n-1}\longrightarrow ...\longrightarrow M_1\stackrel{f_1}{\longrightarrow}M_0\stackrel{f_0}{\longrightarrow}M$$ satisfying the following properties:

\begin{enumerate}
\item[a)] Each $M_n$ is concentrated in degrees $\leq 0$;
\item[b)] The connecting chain maps $f_n:M_n\longrightarrow M_{n-1}$ are quasi-isomorphisms, for all $n\in\mathbb{N}$ (with the convention that $M_{-1}=M$);
\item[c)] Given $n\in\mathbb{N}$, one has that $M_n^{-k}\in\text{sum}(\mathcal{S})$ for $0\leq k\leq n$;
\item[d)] Given any $k\in\mathbb{N}$, the morphism $f_n^{-k}:M_n^{-k}\longrightarrow M_{n-1}^{-k}$ is the identity map, for all $n>k$.
\end{enumerate}
Once the sequence has been constructed, we clearly see that the inverse limit of the sequence, $X:=\varprojlim_{\mathcal{C}(\mathcal{G})}(M_n)$,  is a complex of objects in $\text{sum}(\mathcal{S})$ concentrated in degrees $\leq 0$ such that the induced chain map $X\longrightarrow M$ is a quasi-isomorphism. 

We now pass to construct the mentioned sequence. At step $0$, one easily gives a morphism $f:X^0\longrightarrow M^0$ such that $X^0\in\text{add}(\mathcal{S})$ and the composition $X^0\stackrel{f}{\longrightarrow}M^0\stackrel{p}{\longrightarrow}H^0(M)$ is an epimorphism, where $p$ is the projection.  Now, taking the pullback of $f$ and the differential $M^{-1}\longrightarrow M^0$, we easily get a quasi-isomorphism $f_0:M_0\longrightarrow M$, where $f_0^{-k}:M_0^{-k}=M^{-k}\longrightarrow M^{-k}$ is the identity map for all $k\geq 2$, and $f_0^0:M_0^0=X^0\longrightarrow M^0$ is $f$. 

Assume now that $n>0$ and that the quasi-isomorphisms $M_{n-1}\stackrel{f_{n-1}}{\longrightarrow}M_{n-2}\longrightarrow \cdots \longrightarrow M_{1}\stackrel{f_1}{\longrightarrow}M_{0}\stackrel{f_0}{\longrightarrow}M$ have already been constructed, satisfying the requirements. Note that  $Z^{-k}:=Z^{-k}(M_{n-1})$, and hence also $B^{-k}:=B^{-k}(M_{n-1})$, is a finitely presented object for $k=0,1,...,n-1$. 
   Let us fix a direct system $(Y_i)_{i\in I}$ in $\text{fp}(\mathcal{G})$ such that $\varinjlim Y_i\cong M_{n-1}^{-n}$. Replacing the directed set $I$ by a cofinal subset if necessary, there is no loss of generality in assuming that the composition $Y_j\stackrel{u_j}{\longrightarrow}\varinjlim Y_i\cong M_{n-1}^{-n}\stackrel{d^{-n}}{\longrightarrow}B^{-n+1}$ is an epimorphism, for all $j\in I$, where $u_j$ is the canonical morphism to the direct limit. It is seen in a straightforward way that we have a direct system of exact sequences  $$0\rightarrow u_i^{-1}(Z^{-n})\longrightarrow Y_i\stackrel{d^{-n}\circ u_i}{\longrightarrow}B^{-n+1}\rightarrow 0\hspace*{0.5cm} (i\in I)$$ whose direct limit is precisely the canonical exact sequence $0\rightarrow Z^{-n}\longrightarrow X^{-n}\stackrel{d^{-n}}{\longrightarrow}B^{-n+1}\rightarrow 0$. Due to the fact that $H^{-n}:=H^{-n}(M_{n-1})$ is finitely presented, there is some index $j\in I$ such that the composition $ u_j^{-1}(Z^{-n})\stackrel{u_j}{\longrightarrow}Z^{-n}\stackrel{p}{\longrightarrow}H^{-n}$ is an epimorphism. We fix such an index $j$ and choose any epimorphism $\epsilon:X^{-n}\twoheadrightarrow Y_j$, with $X^{-n}\in\text{sum}(\mathcal{S})$. Putting $M_n^{-n}:=X^{-n}$, the composition $g:M_n^{-n}\stackrel{\epsilon}{\longrightarrow}Y_j\stackrel{u_j}{\longrightarrow}\varinjlim (Y_j)\cong M_{n-1}^{-n}$ is then a morphism which leads to the following commutative diagram, where all squares are bicartesian:

$$\xymatrix{M_{n}^{-n-1} \ar[r] \ar[d]_{g' \vspace{0.1 cm}} & \tilde{B}^{-n} \hspace{0.1 cm} \ar@{^(->}[r] \ar[d] & g^{-1}(Z^{-n})  \hspace{0.1 cm} \ar@{^(->}[r]  \ar[d] & M_{n}^{-n} \ar[d]^{g} \\ M_{n-1}^{-n-1} \ar[r]^{\hspace{0.4 cm}d^{-n-1}} & B^{-n} \hspace{0.1 cm} \ar@{^(->}[r] & Z^{-n}\hspace{0.1 cm} \ar@{^(->}[r] & M_{n-1}^{-n}}$$

We easily derive a quasi-isomorphism  $h:M_n\longrightarrow M_{n-1}$, where $h^{-k}:M_n^{-k}=M_{n-1}^{-k}\longrightarrow M_{n-1}^{-k}$ is the identity map, for $k\geq 0$ and $k\neq n, n+1$, and $h^{-n-1}=g'$ and $h^{-n}=g$ are the morphisms from the last diagram. 

2) By assertion 1, we can assume that $M$ is of the form 

$$\cdots \longrightarrow N^{k}\longrightarrow N^{k+1}\longrightarrow \cdots \longrightarrow N^{n-1}\longrightarrow N^n\longrightarrow 0 \cdots ,$$
where the $N^i$ are in $\text{sum} (\mathcal{S})$. Let us assume that $m=\text{min}\{j\in\mathbb{Z}\text{: }H^j(M)\neq 0\}$. Then the intelligent truncation at $m$ gives  the complex

$$\tau^{\geq m}M: \cdots 0\longrightarrow B^m\hookrightarrow N^{m}\longrightarrow N^{m+1}\longrightarrow \cdots  \longrightarrow N^{n-1}\longrightarrow N^n\longrightarrow 0 \cdots ,$$
where $B^m$ is $m$-boundary object of $M$. But $B^m$ is finitely presented since so is $Z^m=\text{Ker}(N^m\longrightarrow N^{m+1})$. We then take $N^m=B^m$ and the proof of the implication is complete because the canonical map $\tau^{\geq m}M\longrightarrow M$ is an isomorphism in $\mathcal{D}(\mathcal{G})$.

In the rest of the proof, we assume that $\mathcal{S}$ is a set of compact generators of  $\mathcal{D}(\mathcal{G})$. 

3) Note that each bounded complex of objects in $\text{add}(\mathcal{S})$ is compact in $\mathcal{D}(\mathcal{G})$ since it is a finite iterated extension of stalks $X[k]$, with $X\in\text{add}(\mathcal{S})$. Conversely, suppose that $M$ is a compact object in $\mathcal{D}(\mathcal{G})$. It follows from \cite[Theorem 5.3]{Ke} that it is a direct summand of a finite iterated extension of complexes of the form $S[k]$, with $S\in\mathcal{S}$ and $k\in\mathbb{Z}$. In particular $M$ has bounded and finitely presented homology. If we fix now a quasi-isomorphism $f:P\longrightarrow M$ such that $P$ is a bounded above complex of objects in $\text{add}(\mathcal{S})$, then we can assume without loss of generality that $P^0\neq 0=P^k$, for all $k>0$. Note that then $P$ is the Milnor  colimit of the stupid truncations $\sigma^{\geq -n}P$ . Since $P$ is compact in $\mathcal{D}(\mathcal{G})$, an argument as in the proof of \cite[Theorem 5.3]{Ke} shows that the identity map $1_P$ factors in the form $P\longrightarrow\sigma^{\geq -n}P\longrightarrow P$, for some $n\in\mathbb{N}$. It follows that $M\cong P$ is isomorphic in $\mathcal{D}(\mathcal{G})$ to a direct summand of a  bounded complex of objects in $\text{add}(\mathcal{S})$. 
\end{proof}

When $\mathcal{G}$ is a locally coherent Grothendieck category, one easily gets from assertion 1 and 2 of last lemma that $\mathcal{D}^b(\text{fp}(\mathcal{G}))$ is equivalent, as a triangulated category, to the full subcategory $\mathcal{D}^b_{fp}(\mathcal{G})$ of $\mathcal{D}(\mathcal{G})$ consisting of those complexes $M\in\mathcal{D}^b(\mathcal{G})$ such that $H^i(M)\in\text{fp}(\mathcal{G})$, for all $i\in\mathbb{Z}$. In the sequel we will identify these equivalent triangulated categories, viewing $\mathcal{D}^b(\text{fp}(\mathcal{G}))$ as a full triangulated subcategory of $\mathcal{D}(\mathcal{G})$.  

\begin{defi} \label{def.fp-injective}
Let $\mathcal{G}$ be a locally finitely presented Grothendieck category. An object $Y$ of $\mathcal{G}$ will be called \emph{fp-injective}  when $\text{Ext}_\mathcal{G}^1(?,Y)$  vanishes on finitely presented objects. 
\end{defi}

The following is an easy consequence of the proof of implication $1)\Longrightarrow 2)$ in \cite{Sto}[Proposition B.3], after a clear induction argument:

\begin{lema} \label{lem.fp-injectives}
Let $\mathcal{G}$ be a locally coherent Grothendieck category. If $Y$ is an fp-injective object of $\mathcal{G}$, then $\text{Ext}_\mathcal{G}^k(?;Y)$ vanishes on finitely presented objects, for all $k>0$. 
\end{lema}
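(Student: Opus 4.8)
The plan is to prove the statement by induction on $k\geq 1$, but in order for the induction to close I would prove the formally stronger assertion

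$P(k)$: \emph{for every fp-injective object $Z$ of $\mathcal{G}$ and every finitely presented object $S$, one has $\text{Ext}_\mathcal{G}^k(S,Z)=0$.}

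Letting the hypothesis range over \emph{all} fp-injective objects, rather than over the single fixed $Y$, is the decisive point: it is exactly what allows the syzygies produced in the inductive step to feed back into the hypothesis. The base case $P(1)$ is nothing but Definition \ref{def.fp-injective}.

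For the inductive step $P(k)\Longrightarrow P(k+1)$, I would fix an fp-injective $Z$ and a finitely presented $S$, choose a monomorphism $Z\hookrightarrow E$ into an injective object $E$ (injective envelopes exist since $\mathcal{G}$ is Grothendieck), and set $C=E/Z$, giving a short exact sequence $0\to Z\to E\to C\to 0$. Applying $\text{Hom}_\mathcal{G}(S,-)$ and using that $E$ is injective (so that $\text{Ext}_\mathcal{G}^k(S,E)=\text{Ext}_\mathcal{G}^{k+1}(S,E)=0$ for $k\geq 1$), the long exact sequence yields the dimension-shifting isomorphism $\text{Ext}_\mathcal{G}^{k+1}(S,Z)\cong\text{Ext}_\mathcal{G}^{k}(S,C)$. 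Thus, \emph{once I know that $C$ is again fp-injective}, the inductive hypothesis $P(k)$ applied to $C$ gives $\text{Ext}_\mathcal{G}^{k}(S,C)=0$, hence $\text{Ext}_\mathcal{G}^{k+1}(S,Z)=0$, which is $P(k+1)$. Note that this uniform lemma on cokernels is invoked at every step, including the first one ($k=1\to 2$), where it combines with the definition $P(1)$.

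Everything therefore rests on the single statement that \emph{the cokernel $C=E/Z$ of a monomorphism from an fp-injective object into an injective object is again fp-injective}, which is precisely the implication $1)\Longrightarrow 2)$ of \cite[Proposition B.3]{Sto}, so the ``clear induction'' above is what turns that one-step result into vanishing in all degrees. I expect this cokernel lemma to be the main obstacle, and it is worth stressing \emph{why} it cannot be bypassed by a cheaper bootstrap: dimension-shifting in the first variable is circular. Indeed, writing $0\to K\to P\to S\to 0$ with $P$ a finite coproduct of finitely presented generators and $K$ finitely presented (here local coherence is used, ensuring $K\in fp(\mathcal{G})$), the long exact sequence only produces an embedding $\text{Ext}_\mathcal{G}^{k+1}(S,Z)\hookrightarrow\text{Ext}_\mathcal{G}^{k+1}(P,Z)$, reducing the problem to finite coproducts of generators, i.e. back to finitely presented objects of the very same kind — no progress, precisely because $\mathcal{G}$ need not have projective generators. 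The genuinely new input must come from a direct argument on the sequence $0\to Z\to E\to C\to 0$ that exploits coherence (conceptually, that $Z\hookrightarrow E$ is a pure embedding and that, for finitely presented $S$, the functors $\text{Ext}_\mathcal{G}^{j}(S,-)$ interact well with direct limits in a locally coherent category), and this is exactly what Stovicek's proof supplies. Granting that lemma, the induction displayed above is routine.
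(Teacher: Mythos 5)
Your proposal is correct and follows essentially the same route as the paper: the paper's entire proof is the remark that the lemma is ``an easy consequence of the proof of implication $1)\Longrightarrow 2)$ in \cite{Sto}[Proposition B.3], after a clear induction argument'', and your strengthened induction $P(k)$ over \emph{all} fp-injective objects, with the dimension shift along $0\to Z\to E\to C\to 0$ and the cokernel lemma imported from Stovicek's proof, is exactly that ``clear induction'' made explicit. Your aside on why first-variable dimension shifting is circular in the absence of projectives is accurate and explains why the paper leans on \cite{Sto} at precisely this point.
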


Recall that if $F:\mathcal{G}\longrightarrow\text{Ab}$ is any left exact functor, then an object $Y$ of $\mathcal{G}$ is \emph{$F$-acyclic} when  the right derived functors $\mathbf{R}^kF:\mathcal{G}\longrightarrow\text{Ab}$ vanish on $Y$, for all $k>0$. Recall also that, for each $X\in\text{Ob}(\mathcal{G})$, one can  calculate $\mathbf{R}^kF(X)$ by considering $F$-acyclic resolutions. That is, if one chooses an exact sequence $0\rightarrow X\longrightarrow Y^0\stackrel{d^0}{\longrightarrow}Y^1\stackrel{d^1}{\longrightarrow}\cdots Y^n\stackrel{d^n}{\longrightarrow}\cdots $, where all the $Y^k$ are $F$-acyclic, then $\mathbf{R}^kF(X)$ is the $k$-th homology group of the complex $$\cdots 0\longrightarrow F(Y^0)\stackrel{F(d^0)}{\longrightarrow}F(Y^1)\stackrel{F(d^1)}{\longrightarrow}\cdots F(Y^n)\stackrel{F(d^n)}{\longrightarrow}\cdots , $$ for each integer $k\geq 0$. The following result seems to be well-known (see \cite[Introduction]{Gi} or \cite[Chapter 11]{P}), but we include a proof after not finding an explicit one in the literature. 

\begin{prop} \label{prop.monicExt for locally f.p.}
Let $\mathcal{G}$ be a locally finitely presented Grothendieck category, let $X$ be a finitely presented object, let   $(M_i)_{i\in I}$ be a direct system in $\mathcal{G}$ and consider the 
canonical map $\mu_k:\varinjlim\text{Ext}_\mathcal{G}^k(X,M_i)\longrightarrow\text{Ext}_\mathcal{G}^k(X,\varinjlim M_i)$, for each integer $k\geq 0$. The following assertions hold:

\begin{enumerate}
\item $\mu_0$ is an isomorphism and $\mu_1$ is a monomorphism.
\item When $\mathcal{G}$ is locally coherent, $\mu_k$ is an isomorphism, for all $k\geq 0$.
\end{enumerate}
\end{prop}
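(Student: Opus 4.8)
The plan is to fix the finitely presented object $X$ and to regard $\Ext^k(X,-)=\mathbf{R}^k\Hom(X,-)$ as the right derived functors of $\Hom(X,-)\colon\mathcal{G}\longrightarrow\text{Ab}$, which are available since a Grothendieck category has enough injectives. The assertion that $\mu_0$ is an isomorphism is merely the definition of $X$ being finitely presented, so the real content concerns $k\geq 1$. The engine driving both assertions will be a single \emph{functorial} injective coresolution of the whole direct system. Any Grothendieck category carries a functorial injective embedding: fixing an injective cogenerator $E$, the assignment $M\rightsquigarrow H(M):=\prod_{f\in\Hom_\mathcal{G}(M,E)}E$ together with the evaluation map $\eta_M\colon M\hookrightarrow H(M)$, $m\mapsto(f(m))_f$, is a natural monomorphism into an injective object (a product of injectives). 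Applying $\eta$ levelwise to $(M_i)_{i\in I}$ produces a short exact sequence of direct systems $0\rightarrow(M_i)\rightarrow(H(M_i))\rightarrow(M_i')\rightarrow 0$, and, since direct limits are exact in $\mathcal{G}$, passing to the limit yields a short exact sequence $0\rightarrow\varinjlim M_i\rightarrow\varinjlim H(M_i)\rightarrow\varinjlim M_i'\rightarrow 0$ which is compatible, through the colimit cocone, with each finite stage.

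For assertion 1 I would run the long exact sequence of $\Ext^*(X,-)$ on these sequences. Because every $H(M_i)$ is injective one obtains $\Ext^1(X,M_i)\cong\text{coker}(\Hom(X,H(M_i))\to\Hom(X,M_i'))$, and applying the exact functor $\varinjlim$ together with the already settled case $\mu_0$ (now for the systems $(H(M_i))$ and $(M_i')$) identifies $\varinjlim\Ext^1(X,M_i)$ with $\text{coker}(\Hom(X,\varinjlim H(M_i))\to\Hom(X,\varinjlim M_i'))$. On the other hand, the long exact sequence for $\varinjlim M_i$ exhibits exactly this cokernel as the image of $\Hom(X,\varinjlim M_i')$ in $\Ext^1(X,\varinjlim M_i)$, and a diagram chase shows that $\mu_1$ is precisely the inclusion of that image; hence $\mu_1$ is a monomorphism, its cokernel embedding into $\Ext^1(X,\varinjlim H(M_i))$, which is the sole obstruction to surjectivity.

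For assertion 2 the key is that this obstruction disappears: in a locally coherent category $\varinjlim H(M_i)$ is again \emph{fp-injective}, so by Lemma~\ref{lem.fp-injectives} one has $\Ext^{>0}(X,\varinjlim H(M_i))=0$, which upgrades the previous computation to show that $\mu_1$ is an isomorphism. I would then finish by induction on $k$. For $k\geq 1$ the connecting morphisms of the two long exact sequences give isomorphisms $\varinjlim\Ext^k(X,M_i')\xrightarrow{\ \sim\ }\varinjlim\Ext^{k+1}(X,M_i)$ (each $H(M_i)$ being injective) and $\Ext^k(X,\varinjlim M_i')\xrightarrow{\ \sim\ }\Ext^{k+1}(X,\varinjlim M_i)$ (using $\Ext^{>0}(X,\varinjlim H(M_i))=0$), and these fit into a naturality square whose vertical maps are $\mu_k$ for the shifted system $(M_i')$ and $\mu_{k+1}$ for $(M_i)$. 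Thus $\mu_{k+1}$ for $(M_i)$ is an isomorphism as soon as $\mu_k$ is one for $(M_i')$, and the induction bottoms out at the cases $k=0,1$ already treated.

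The crux, and the only place where local coherence is genuinely used, is the closure of the class of fp-injective objects under direct limits, guaranteeing that $\varinjlim H(M_i)$ remains fp-injective. This is a standard hallmark of local coherence, which I would invoke (it can be extracted from the fact that $\text{fp}(\mathcal{G})$ is an abelian exact subcategory, or cited from the literature). I expect this closure property to be the main obstacle, in the sense that everything else is formal homological algebra valid in an arbitrary Grothendieck category; once it is at hand, the dimension-shifting induction runs without further friction.
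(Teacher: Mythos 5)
Your proposal is correct, but it takes a genuinely different route from the paper's own proof. For assertion 1, the paper argues directly with Yoneda extensions: an element of $\varinjlim\text{Ext}^1_\mathcal{G}(X,M_i)$ is represented by a direct system of short exact sequences $0\rightarrow M_i\longrightarrow N_i\longrightarrow X\rightarrow 0$, and if the limit sequence splits then the section $X\longrightarrow\varinjlim N_i$ factors through some $N_j$ because $X$ is finitely presented, so the $j$-th sequence already splits; your ladder of long exact sequences built from the functorial embedding $M\rightsquigarrow\prod_{f\in\text{Hom}_\mathcal{G}(M,E)}E$, concluded by the four lemma, reaches the same monomorphism by purely formal means. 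For assertion 2 the divergence is more substantial: the paper first invokes the reindexing result of Adamek--Rosicky to replace $I$ by a limit ordinal $\lambda$ with continuity at limit stages, and then transfinitely constructs a compatible system of injective resolutions $(E_\alpha)_{\alpha<\lambda}$ (minimal resolutions at nonlimit stages, colimits at limit stages), so that $\varinjlim E_\alpha$ is an fp-injective resolution of $\varinjlim M_\alpha$ computing all the $\text{Ext}^k_\mathcal{G}(X,?)$ at once by acyclicity; you instead perform a single functorial syzygy shift and run a dimension-shifting induction on $k$, quantified over all direct systems, which avoids both the reindexing lemma and the transfinite construction. Both arguments rest on exactly the same two pillars --- closure of fp-injectives under direct limits (\cite[Proposition B.3]{Sto}) and their $\text{Hom}_\mathcal{G}(X,?)$-acyclicity for $X$ finitely presented (Lemma~\ref{lem.fp-injectives}) --- so you have correctly isolated where local coherence enters; the paper's construction additionally yields an explicit fp-injective resolution of the colimit as a direct limit of injective resolutions, which is of independent interest, whereas your argument is shorter and stays within elementary homological algebra. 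If you write it up, make explicit that $M\rightsquigarrow H(M)$ is covariantly functorial (so that $(M_i')$ is again a direct system and the colimit cocone is compatible with the two long exact sequences), and include the one-line four-lemma chase identifying $\mu_1$ with the inclusion of the image of $\text{Hom}_\mathcal{G}(X,\varinjlim M_i')$ in $\text{Ext}^1_\mathcal{G}(X,\varinjlim M_i)$.
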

\begin{proof}

1) The case $k=0$ follows from the definition of finitely presented object.  
An element of $\varinjlim\text{Ext}_\mathcal{G}^1(X,M_i)$ is represented by a direct system $(\epsilon_i)_{i\in I}$ of exact sequences 
 $$\epsilon_i: \hspace*{0.5cm} 0\rightarrow M_i\longrightarrow N_i\longrightarrow X\rightarrow 0 $$ whose 'projection' on the first component is precisely the direct system $(M_i)_{i\in I}$ and where $X$ is viewed as a constant direct system. The image of $(\epsilon_i)$ by the canonical map $\varinjlim\text{Ext}_\mathcal{G}^1(X,M_i)\longrightarrow\text{Ext}_\mathcal{G}(X,\varinjlim M_i)$ is the induced exact sequence $$0\rightarrow\varinjlim M_i\longrightarrow\varinjlim N_i\stackrel{\pi}{\longrightarrow} X\rightarrow 0.$$ If this latter sequence splits and we fix a section $\mu :X\longrightarrow\varinjlim N_i$ for $\pi$, then, due to the fact that $X$ is a finitely presented object,  $\mu$ factors in the form $X\stackrel{\mu_j}{\longrightarrow}N_j\stackrel{u_j}{\longrightarrow}\varinjlim N_i$, for some $j\in I$, where $u_j$ is the canonical morphism to the direct limit. This immediately implies that the $j$-th sequence $\epsilon_j: \hspace*{0.5cm} 0\rightarrow M_j\longrightarrow N_j\longrightarrow X\rightarrow 0$ splits and, hence, that $(\epsilon_i)_{i\in I}$ is the zero element of $\varinjlim\text{Ext}_\mathcal{G}^1(X,M_i)$.

2) By \cite[Corollary 1.7 and subsequent remark]{AR}, we can assume without loss of generality that $I=\lambda =\{\alpha\text { ordinal: }\alpha<\lambda\}$ is an infinite limit ordinal and that, for each limit ordinal $\alpha <\lambda$, one has $M_\alpha =\varinjlim_{\beta <\alpha}M_\beta$. We now construct a direct system $(E_\alpha )_{\alpha <\lambda}$ in the category $\mathcal{C}(\mathcal{G})$ of complexes,  satisfying the following properties:

\begin{enumerate}
\item[a)] $E_\alpha :$ \hspace*{0.5cm} $\cdots 0\rightarrow E_\alpha^0\rightarrow E_\alpha^1\rightarrow \cdots \rightarrow  E_\alpha^n\rightarrow \cdots $ \hspace*{0.5cm} is a complex concentrated in degrees $\geq 0$ and $H^k(E_\alpha )=0$, for all $\alpha <\lambda$ and all $k\neq 0$;

\item[b)] $E_\alpha^n$ is an fp-injective object, for all $\alpha <\lambda$ and all integers $n\geq 0$;
\item[c)] The direct system $(H^0(E_\alpha))_{\alpha <\lambda}$ in $\mathcal{G}$ is isomorphic to $(M_\alpha )_{\alpha <\lambda}$. 

\end{enumerate}
Once the direct system $(E_\alpha )_{\alpha <\lambda}$ will be constructed, the exactness of the direct limit functor in $\mathcal{G}$ and the fact that the class of fp-injective objects is closed under taking direct limits (see \cite[Proposition B.3]{Sto}) will give that $E_\lambda:=\varinjlim_{\mathcal{C}(\mathcal{G})}E_\alpha$ is a complex of fp-injective objects concentrated in degrees $\geq 0$ whose only nonzero homology object is $H^0(E_\lambda )\cong\varinjlim_{\alpha <\lambda} M_\alpha$. That is, $E_\lambda$ is a (deleted) fp-injective resolution of $M:=\varinjlim_{\alpha <\lambda} M_\alpha$. By the previous lemma, we know that each fp-injective object is $\text{Hom}_\mathcal{G}(X,?)$-acyclic, whenever $X\in fp(\mathcal{G})$. It follows that, for such an $X$, we have that $\text{Ext}_\mathcal{G}^k(X,M)$ is the $k$-th homology abelian group of the complex $\text{Hom}_\mathcal{G}(X,E_\lambda)$.  
But, by definition of $E_\lambda$ and the fact that $\text{Hom}_\mathcal{G}(X,?)$ preserves direct limits, we have an isomorphism of complexes of abelian groups $\varinjlim_{\mathcal{C}(Ab)}(\text{Hom}_\mathcal{G}(X,E_\alpha ))\cong \text{Hom}_\mathcal{G}(X,E_\lambda)$. The $k$-th homology map will give  then the desired isomorphism $\varinjlim\text{Ext}_\mathcal{G}^k(X,M_\alpha )\stackrel{\cong}{\longrightarrow}\text{Ext}_\mathcal{G}^k(X,M)=\text{Ext}_\mathcal{G}^k(X,\varinjlim_{\alpha <\lambda}M_\alpha )$. 

It remains to construct the direct system $(E_\alpha )_{\alpha <\lambda}$ in $\mathcal{C}(\mathcal{G})$. We denote by $u_\alpha :M_\alpha\longrightarrow M_{\alpha +1}$ the morphism from the direct system $(M_\alpha )_{\alpha <\lambda}$.  For a nonlimit ordinal $\alpha$, $E_\alpha$ will be the (deleted) minimal injective resolution of $M_\alpha$. If $\alpha$ is a limit ordinal and we already have defined the direct system $(E_\beta )_{\beta <\alpha}$, then $E_\alpha =\varinjlim_{\beta <\alpha}E_\beta$, where the direct limit is taken in $\mathcal{C}(\mathcal{G})$. Note that one has that $H^0(E_\alpha )\cong\varinjlim_{\beta <\alpha} M_\beta =M_\alpha$. For the construction of $(E_\alpha)_{\alpha <\lambda}$ one just need to define the connecting chain map $E_\alpha\longrightarrow E_{\alpha +1}$, when  $\alpha<\lambda$ is any ordinal for which $E_\alpha$ is already defined. This connecting chain map is defined by chosing a family $(f_\alpha^n:E_\alpha^n\longrightarrow E_{\alpha +1}^n)_{n\geq 0}$ of morphisms in $\mathcal{G}$ such that the following diagram is commutative and the induced map $\text{Ker}(E_\alpha^0\longrightarrow E_{\alpha}^1)\longrightarrow\text{Ker}(E_{\alpha +1}^0)\longrightarrow E_{\alpha +1}^1))$ is the morphism $u_\alpha :M_\alpha\longrightarrow M_{\alpha +1}$:

$$\xymatrix{0 \ar[r] & E_{\alpha}^{0} \ar[r] \ar[d]^{f^{0}_{\alpha}} & E_{\alpha}^{1} \ar[r] \ar[d]^{f^{1}_{\alpha}} & \cdots \\ 0 \ar[r] & E_{\alpha +1 }^{0} \ar[r] & E_{\alpha +1}^{1} \ar[r] & \cdots }$$

  The reader is invited to check that the direct system $(E_\alpha )_{\alpha <\lambda}$ satisfies all the requirements. 
\end{proof}

\section{Some sufficient conditions for the heart to be a locally coherent Grothendieck category}

\begin{defi} \label{def.restricted t-structure}
Let $\mathcal{D}'$ be a full triangulated subcategory of the triangulated category $\mathcal{D}$ and let $(\mathcal{U},\mathcal{U}^\perp [1])$ be a t-structure in $\mathcal{D}$. We say that this t-structure \emph{restricts to $\mathcal{D}'$} when $(\mathcal{U}\cap\mathcal{D}',(\mathcal{U}^\perp\cap\mathcal{D}')[1])$ is a t-structure of $\mathcal{D}'$. This is equivalent to saying that, for each object $X$ of $\mathcal{D}'$, the truncation triangle $\tau_\mathcal{U}(X)\longrightarrow X\longrightarrow\tau^{\mathcal{U}^\perp}(X)\stackrel{+}{\longrightarrow}$ has its three vertices in $\mathcal{D}'$. 

\end{defi}

\begin{lema} \label{lem.restricted heart}
Let $\mathcal{D}'$ be a full triangulated subcategory of $\mathcal{D}$ and let $(\mathcal{U},\mathcal{U}^\perp [1])$ be a t-structure in $\mathcal{D}$ whose heart is $\mathcal{H}$. If the t-structure restricts to $\mathcal{D}'$, then $\mathcal{H}\cap\mathcal{D}'$ is an abelian exact subcategory of $\mathcal{H}$. 
\end{lema}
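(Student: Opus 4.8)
The plan is to verify the two conditions of Lemma~\ref{lem.abelian exact subcategory} for the full subcategory $\mathcal{B}:=\mathcal{H}\cap\mathcal{D}'$ of the abelian category $\mathcal{A}:=\mathcal{H}$. That is, I must show that $\mathcal{B}$ is closed under taking finite (co)products, kernels and cokernels in $\mathcal{H}$. Finite coproducts (equivalently, finite products, since $\mathcal{H}$ is abelian) pose no difficulty: $\mathcal{D}'$ is a triangulated subcategory and hence closed under finite coproducts in $\mathcal{D}$, while $\mathcal{H}$ is closed under finite coproducts in $\mathcal{D}$ as well, so the intersection inherits this closure. The real content lies in showing closure under kernels and cokernels, and this is exactly where the hypothesis that the t-structure restricts to $\mathcal{D}'$ enters.

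\textbf{Kernels and cokernels via triangles.} The key point I would exploit is the standard description of (co)kernels in the heart $\mathcal{H}$ in terms of truncation functors. Given a morphism $f:X\longrightarrow Y$ in $\mathcal{H}$, complete it to a triangle $X\stackrel{f}{\longrightarrow}Y\longrightarrow C\stackrel{+}{\longrightarrow}$ in $\mathcal{D}$. The cohomological functor $\tilde{H}:\mathcal{D}\longrightarrow\mathcal{H}$ then computes the kernel and cokernel of $f$ in $\mathcal{H}$ as $\tilde{H}^{-1}(C)$ and $\tilde{H}^0(C)$ respectively (up to the usual normalization), and these are obtained by applying the truncation functors $\tau_\mathcal{U}$ and $\tau^{\mathcal{U}^\perp}$ to the cone $C$. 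So my plan is: take $f$ a morphism between objects of $\mathcal{B}=\mathcal{H}\cap\mathcal{D}'$, form the cone $C$ in $\mathcal{D}$, and argue that the kernel and cokernel of $f$ computed in $\mathcal{H}$ both lie in $\mathcal{D}'$.

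\textbf{Staying inside $\mathcal{D}'$.} Here is where the restriction hypothesis does the work. Since $X,Y\in\mathcal{D}'$ and $\mathcal{D}'$ is triangulated (hence closed under cones and extensions), the cone $C$ lies in $\mathcal{D}'$. Now I apply the truncation functors. Because the t-structure restricts to $\mathcal{D}'$, Definition~\ref{def.restricted t-structure} guarantees that for any object of $\mathcal{D}'$ — in particular for $C$ — the truncation triangle $\tau_\mathcal{U}(C)\longrightarrow C\longrightarrow\tau^{\mathcal{U}^\perp}(C)\stackrel{+}{\longrightarrow}$ has all three vertices in $\mathcal{D}'$. Iterating these truncations to extract $\tilde{H}^{-1}(C)$ and $\tilde{H}^0(C)$ keeps us inside $\mathcal{D}'$ at every stage, since each intermediate object is again a truncation of an object already known to lie in $\mathcal{D}'$, and these truncated objects, viewed as stalks in the heart, also sit in $\mathcal{D}'$. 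Thus both $\ker f$ and $\operatorname{coker} f$, as objects of $\mathcal{H}$, belong to $\mathcal{D}'$, i.e. to $\mathcal{B}$.

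\textbf{Expected obstacle and conclusion.} The main subtlety I anticipate is purely bookkeeping: making precise that the heart-objects $\ker f$ and $\operatorname{coker} f$ genuinely \emph{are} the truncations that we have placed in $\mathcal{D}'$, rather than merely isomorphic to something computed outside $\mathcal{D}'$. This requires invoking that $\mathcal{D}'$ is a \emph{full} triangulated subcategory, so that an object of $\mathcal{H}$ lying in $\mathcal{D}'$ and a truncation performed in $\mathcal{D}'$ agree with the same construction performed in $\mathcal{D}$; the restriction hypothesis is precisely the guarantee that the ambient and restricted truncations coincide on objects of $\mathcal{D}'$. Once this is settled, $\mathcal{B}=\mathcal{H}\cap\mathcal{D}'$ is closed under finite (co)products, kernels and cokernels in $\mathcal{H}$, so by Lemma~\ref{lem.abelian exact subcategory} it is an abelian exact subcategory of $\mathcal{H}$, as claimed. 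I would not expect any genuinely deep difficulty here; the lemma is essentially a formal consequence of the restriction property together with the triangulated description of kernels and cokernels in a heart.
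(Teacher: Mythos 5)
Your proposal is correct and takes essentially the same route as the paper's proof: complete $f:X\longrightarrow Y$ to a triangle whose cone lies in $\mathcal{D}'$, then use the restriction hypothesis to see that the truncations of the (shifted) cone --- which, by \cite{BBD}, compute the kernel and cokernel of $f$ in $\mathcal{H}$ via the cohomological functor $\tilde{H}$ --- remain inside $\mathcal{D}'$. The bookkeeping you flag at the end is handled in the paper by observing that the cone $Z$ lies in $\mathcal{U}\cap\mathcal{U}^\perp[2]$, so that $\tilde{H}(Z)=\tau^{\mathcal{U}^\perp}(Z[-1])[1]$ and $\tilde{H}(Z[-1])=\tau_{\mathcal{U}}(Z[-1])$ are each a single truncation of $Z[-1]\in\mathcal{D}'$, but this is only a cosmetic simplification of your iterated-truncation argument.
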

\begin{proof}
Let $f:X\longrightarrow Y$ be a morphism in $\mathcal{H}\cap\mathcal{D}'$ and complete it to a triangle, which is  in $\mathcal{D}'$:

\begin{center}
$X\stackrel{f}{\longrightarrow}Y\longrightarrow Z\stackrel{+}{\longrightarrow}$.
\end{center}
 Note that then $Z\in\mathcal{U}\cap\mathcal{U}^\perp [2]$ and, hence $Z[-1]\in\mathcal{U}^\perp [1]$. According to  \cite[Lemma 3.1]{PS1}, we have $\tilde{H}(Z)=\tau^{\mathcal{U}^\perp }(Z[-1])[1]$ and $\tilde{H}(Z[-1])=\tau_\mathcal{U}(Z[-1])$. Moreover, since the t-structure restrict to $\mathcal{D}'$ we get that both $\tilde{H}(Z)$ and $\tilde{H}(Z[-1])$ are in $\mathcal{H}\cap\mathcal{D'}$. But we then have a triangle 

\begin{center}
$\tilde{H}(Z[-1])[1]\longrightarrow Z\longrightarrow\tilde{H}(Z)\stackrel{+}{\longrightarrow}$.
\end{center}
By \cite{BBD}, we have isomorphisms $\text{Ker}_\mathcal{H}(f)\cong\tilde{H}(Z[-1])$ and  $\text{Coker}_\mathcal{H}(f)\cong\tilde{H}(Z)$ and, hence, $\mathcal{H}\cap\mathcal{D}'$ is closed under taking kernels and cokernels in $\mathcal{H}$. That it is also closed under taking finite coproducts is clear. 
\end{proof}

\begin{setting} \label{setting}
In the rest of the section we assume that $\mathcal{G}$ is a locally coherent Grothendieck category and we fix a set $\mathcal{S}$ of finitely presented generators of $\mathcal{G}$. Recall that then $\mathcal{S}$ is also a set of generators of $\mathcal{D}(\mathcal{G})$ as a triangulated category (see \cite[Lemma 9]{NSZ} or \cite[Lemma 4.10]{PV}).
\end{setting}

\begin{lema} \label{lem.from bounded to compact}
Let   $X\in\mathcal{D}^{\leq 0}(\mathcal{G})$ have bounded  finitely presented homology (i.e. $X$ is homologically bounded and  $H^k(X)\in\text{fp}(\mathcal{G})$, for all $k\in\mathbb{Z}$) and let $n$ be a natural number. There is a complex   $P\in\mathcal{C}^b(\text{sum}(\mathcal{S}))$ together with a morphism $g:P\longrightarrow X$ in $\mathcal{D}(\mathcal{G})$ such that the restriction of the natural transformation $g^*:\text{Hom}_{\mathcal{D}(\mathcal{G})}(X,?)\longrightarrow\text{Hom}_{\mathcal{D}(\mathcal{G})}(P,?)$ to $\mathcal{D}^{[-n,0]}(\mathcal{G})$ is a natural isomorphism. 
\end{lema}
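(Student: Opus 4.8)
The plan is to represent $X$ by an explicit complex of objects of $\text{sum}(\mathcal{S})$, cut it off far enough to the left by a stupid truncation to obtain the bounded complex $P$, and then verify the two Hom-vanishing conditions supplied by the resulting truncation triangle.

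First I would use Lemma \ref{lem.bounded f.p. homology}(1): since $X\in\mathcal{D}^{\leq 0}(\mathcal{G})$ has finitely presented homology, it is isomorphic in $\mathcal{D}(\mathcal{G})$ to an upper bounded complex $N$ with $N^i\in\text{sum}(\mathcal{S})$ for all $i$, and by the ``moreover'' clause $N$ can be chosen with $\max\{i:N^i\neq 0\}=\max\{i:H^i(X)\neq 0\}\leq 0$, so $N^i=0$ for $i>0$. For an integer $m$ to be fixed later I set $P:=\sigma^{\geq -m}N$, the stupid truncation; this is a bounded complex with $P^i=N^i\in\text{sum}(\mathcal{S})$ for $-m\leq i\leq 0$ and $P^i=0$ otherwise, so $P\in\mathcal{C}^b(\text{sum}(\mathcal{S}))$. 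The termwise-split inclusion $\sigma^{\geq -m}N\hookrightarrow N$, composed with the isomorphism $N\cong X$, provides the desired $g:P\to X$ in $\mathcal{D}(\mathcal{G})$.

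Next I would pass to the degreewise-split short exact sequence of complexes $0\to\sigma^{\geq -m}N\to N\to\sigma^{<-m}N\to 0$, which yields a triangle $P\xrightarrow{g}X\to C\xrightarrow{+}$ in $\mathcal{D}(\mathcal{G})$ with $C=\sigma^{<-m}N$. As $C$ is concentrated in degrees $\leq -m-1$, we have $C\in\mathcal{D}^{\leq -m-1}(\mathcal{G})$. Applying the contravariant functor $\text{Hom}_{\mathcal{D}(\mathcal{G})}(-,Y)$ to the rotated triangle $C[-1]\to P\to X\to C$ gives the exact sequence
$$\text{Hom}_{\mathcal{D}(\mathcal{G})}(C,Y)\longrightarrow\text{Hom}_{\mathcal{D}(\mathcal{G})}(X,Y)\xrightarrow{g^*}\text{Hom}_{\mathcal{D}(\mathcal{G})}(P,Y)\longrightarrow\text{Hom}_{\mathcal{D}(\mathcal{G})}(C[-1],Y),$$
whose two outer terms are $\text{Hom}_{\mathcal{D}(\mathcal{G})}(C,Y)$ and $\text{Hom}_{\mathcal{D}(\mathcal{G})}(C,Y[1])$.

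Finally, for $Y\in\mathcal{D}^{[-n,0]}(\mathcal{G})\subseteq\mathcal{D}^{\geq -n}(\mathcal{G})$ we have $Y\in\mathcal{D}^{\geq -n}(\mathcal{G})$ and $Y[1]\in\mathcal{D}^{\geq -n-1}(\mathcal{G})$. Using the standard orthogonality $\text{Hom}_{\mathcal{D}(\mathcal{G})}(A,B)=0$ whenever $A\in\mathcal{D}^{\leq a}(\mathcal{G})$, $B\in\mathcal{D}^{\geq b}(\mathcal{G})$ and $a<b$ (axiom (i) for the standard t-structure, see \cite{BBD}), both outer terms vanish as soon as $m\geq n+1$; the binding constraint is the vanishing of $\text{Hom}_{\mathcal{D}(\mathcal{G})}(C,Y[1])$, which requires $-m-1<-n-1$. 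Fixing $m=n+1$ then forces $g^*_Y$ to be an isomorphism for every $Y\in\mathcal{D}^{[-n,0]}(\mathcal{G})$, and since naturality in $Y$ is automatic, the restriction of $g^*$ to $\mathcal{D}^{[-n,0]}(\mathcal{G})$ is a natural isomorphism. I expect the only delicate point to be the bookkeeping of cohomological degrees — pinning down the inequality $m\geq n+1$ and confirming that \emph{both} outer terms (not merely the first) are forced to vanish; the rest is the formal machinery of stupid truncations together with the orthogonality built into the standard t-structure.
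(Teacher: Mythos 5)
Your proof is correct and follows essentially the same route as the paper: both represent $X$ via Lemma \ref{lem.bounded f.p. homology}(1) by an upper bounded complex of objects of $\text{sum}(\mathcal{S})$, take the stupid truncation at the same spot (your $P=\sigma^{\geq -n-1}N$ with $m=n+1$ is exactly the paper's $\sigma^{>-n-2}Q$), and deduce the isomorphism from the vanishing of $\text{Hom}_{\mathcal{D}(\mathcal{G})}(C[k],Y)$ for $k=0,-1$ and $Y\in\mathcal{D}^{[-n,0]}(\mathcal{G})$ via the truncation triangle. Your degree bookkeeping, including the observation that the binding constraint comes from the $\text{Hom}_{\mathcal{D}(\mathcal{G})}(C,Y[1])$ term, is exactly right.
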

\begin{proof}
By Lemma \ref{lem.bounded f.p. homology}, there is an isomorphism  $p:Q\longrightarrow X$  in $\mathcal{D}(\mathcal{G})$ such that  that $Q$ is a complex of objects in $\text{sum}(\mathcal{S})$ concentrated in degrees $\leq 0$. We have that  
$p^*:\text{Hom}_{\mathcal{D}(\mathcal{G})}(X,?)\stackrel{\cong}{\longrightarrow}\text{Hom}_{\mathcal{D}(\mathcal{G})}(Q,?)$ is a natural isomorphism of functors $\mathcal{D}(\mathcal{G})\longrightarrow\text{Ab}$. Stupid truncation at $-n-2$ gives a triangle in $\mathcal{K}(\mathcal{G})$

\begin{center}
$\sigma^{>-n-2}Q\stackrel{h}{\longrightarrow}Q\longrightarrow\sigma^{\leq -n-2}Q\stackrel{+}{\longrightarrow}$,
\end{center}
where the left vertex is in $\mathcal{C}^b(\text{sum}(\mathcal{S}))$. 
Since $\text{Hom}_{\mathcal{D}(\mathcal{G})}(\sigma^{\leq -n-2}Q[k],?)$ vanishes on $\mathcal{D}^{[-n,0]}(\mathcal{G})$, for $k=-1,0$, we get that the restriction of the natural transformation $h^*:\text{Hom}_{\mathcal{D}(\mathcal{G})}(Q,?)\longrightarrow\text{Hom}_{\mathcal{D}(\mathcal{G})}(\sigma^{>-n-2}Q,?)$ to $\mathcal{D}^{[-n,0]}(\mathcal{G})$ is an isomorphism. 
Putting $P:=\sigma^{>-n-2}Q$,   the desired morphism $g$ is the composition $P\stackrel{h}{\longrightarrow}Q\stackrel{p}{\longrightarrow}X$. 
\end{proof}

\begin{rem} \label{rem.tildeH on aisle and co-aisle}
Let $(\mathcal{U},\mathcal{U}^\perp [1])$ be a t-structure in any triangulated category $\mathcal{D}$ and suppose that it restricts to a full triangulated subcategory $\mathcal{D}'$. If $\tilde{H}:\mathcal{D}\longrightarrow\mathcal{H}$ is the associated cohomological functor, then $\tilde{H}(M)$ is in $\mathcal{H}\cap\mathcal{D}'$, for all $M\in\mathcal{D}'$. This is due to the fact that we have $\tau_\mathcal{U}(\mathcal{D}')\subseteq\mathcal{D}'$ and $\tau^{\mathcal{U}^\perp [1]}(\mathcal{D}')\subseteq\mathcal{D}'$.
  
\end{rem}

The following technical result is crucial for the main results of the paper.

\begin{prop} \label{prop.locally coherent Grothendieck heart}
Let $\mathcal{G}$ and $\mathcal{S}$ be as in Setting \ref{setting}, 
 let $(\mathcal{U},\mathcal{U}^\perp [1])$ be a t-structure in $\mathcal{D}(\mathcal{G})$, with heart $\mathcal{H}$,  and let $\tilde{H}:\mathcal{D}(\mathcal{G})\longrightarrow\mathcal{H}$ be the associated cohomological functor. Suppose that the following conditions hold:

\begin{enumerate}
\item $(\mathcal{U},\mathcal{U}^\perp [1])$ restricts to $\mathcal{D}^b(\text{fp}(\mathcal{G}))$;
\item There exist integers $m\leq n$ such that $\mathcal{D}^{\leq m}(\mathcal{G})\subseteq\mathcal{U}\subseteq\mathcal{D}^{\leq n}(\mathcal{G})$;
\item $\mathcal{H}\cap\mathcal{D}^b(\text{fp}(\mathcal{G}))$ is a (skeletally small) class of generators of $\mathcal{H}$;
\item For each direct system $(M_i)_{i\in I}$ in $\mathcal{H}$, for each $S\in\mathcal{S}$ and for each $k\in\mathbb{Z}$, the canonical map $\eta_{S[k]}:\varinjlim\text{Hom}_{\mathcal{D}(\mathcal{G})}(S[k],M_i)\longrightarrow\text{Hom}_{\mathcal{D}(\mathcal{G})}(S[k],\varinjlim_\mathcal{H}M_i)$ is an isomorphism;
\end{enumerate}
Then $\mathcal{H}$ is a locally coherent Grothendieck category on which $\mathcal{H}\cap\mathcal{D}^b(\mathcal{G})$ is the class of its finitely presented objects. 
\end{prop}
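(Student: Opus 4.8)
Write $\mathcal{A}:=\mathcal{H}\cap\mathcal{D}^b(\text{fp}(\mathcal{G}))$. The plan is to show that $\mathcal{A}$ is a skeletally small abelian category which is a generating class of \emph{finitely presented} objects of the cocomplete abelian category $\mathcal{H}$, and then to identify $\mathcal{H}$ with the category $\mathcal{L}:=\text{Lex}(\mathcal{A}^{op},\text{Ab})$ of left exact functors, which is known to be a locally coherent Grothendieck category with $\text{fp}(\mathcal{L})\simeq\mathcal{A}$ (the Yoneda embedding). First, by condition 1 and Lemma \ref{lem.restricted heart}, $\mathcal{A}$ is an abelian exact subcategory of $\mathcal{H}$, and by condition 3 it is skeletally small and generates $\mathcal{H}$. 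Condition 2 yields $\mathcal{U}^\perp\subseteq\mathcal{D}^{\geq m+1}(\mathcal{G})$ and $\mathcal{U}\subseteq\mathcal{D}^{\leq n}(\mathcal{G})$, hence $\mathcal{H}=\mathcal{U}\cap\mathcal{U}^\perp[1]\subseteq\mathcal{D}^{[m,n]}(\mathcal{G})$. Since any aisle is closed under coproducts (because $\text{Hom}_{\mathcal{D}(\mathcal{G})}(\coprod U_i,W)=\prod\text{Hom}_{\mathcal{D}(\mathcal{G})}(U_i,W)$ and $\mathcal{U}={}^\perp(\mathcal{U}^\perp)$), the coproduct in $\mathcal{H}$ of objects of $\mathcal{H}$ is computed as $\tilde{H}$ of their coproduct in $\mathcal{D}(\mathcal{G})$; thus $\mathcal{H}$ is cocomplete.

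\textbf{The crux: $\mathcal{A}\subseteq\text{fp}(\mathcal{H})$.} Fix $X\in\mathcal{A}$; after shifting I may assume $\mathcal{H}\subseteq\mathcal{D}^{[-n,0]}(\mathcal{G})$ and $X\in\mathcal{D}^{\leq 0}(\mathcal{G})$ with bounded finitely presented homology. By Lemma \ref{lem.from bounded to compact} there is a compact $P\in\mathcal{C}^b(\text{sum}(\mathcal{S}))$ and $g:P\to X$ such that $g^*:\text{Hom}_{\mathcal{D}(\mathcal{G})}(X,?)\to\text{Hom}_{\mathcal{D}(\mathcal{G})}(P,?)$ is a natural isomorphism on $\mathcal{D}^{[-n,0]}(\mathcal{G})\supseteq\mathcal{H}$. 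As $\text{Hom}_\mathcal{H}(X,?)=\text{Hom}_{\mathcal{D}(\mathcal{G})}(X,?)|_\mathcal{H}$, it suffices to prove that the canonical map $\eta_P:\varinjlim_i\text{Hom}_{\mathcal{D}(\mathcal{G})}(P,M_i)\to\text{Hom}_{\mathcal{D}(\mathcal{G})}(P,\varinjlim_\mathcal{H}M_i)$ is an isomorphism for every direct system $(M_i)$ in $\mathcal{H}$. Both functors of $P$ are cohomological, so the class of $W$ with $\eta_W$ an isomorphism is closed under shifts, extensions and summands; by condition 4 and additivity it contains every stalk $S'[k]$ with $S'\in\text{sum}(\mathcal{S})$, hence all of $\text{thick}_{\mathcal{D}(\mathcal{G})}(\mathcal{S})$, in particular $P$. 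Combining $\eta_P$ with the isomorphism $g^*$ shows that $\text{Hom}_\mathcal{H}(X,?)$ preserves direct limits, i.e. $X$ is finitely presented in $\mathcal{H}$.

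\textbf{The equivalence $\mathcal{H}\simeq\mathcal{L}$.} I would consider the restricted Yoneda functor $\Phi:\mathcal{H}\to\mathcal{L}$, $M\mapsto\text{Hom}_\mathcal{H}(?,M)|_\mathcal{A}$; it lands in $\mathcal{L}$ since $\text{Hom}_\mathcal{H}(?,M)$ is left exact and $\mathcal{A}$ is abelian exact. Because each $A\in\mathcal{A}$ is finitely presented (previous step), $\text{Hom}_\mathcal{H}(A,?)$ commutes with coproducts and direct limits; as filtered colimits in $\mathcal{L}$ are computed pointwise and arbitrary coproducts are detected by the finitely presented objects $\Phi(A)$ of $\mathcal{L}$, it follows that $\Phi$ preserves coproducts and direct limits. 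Using that $\mathcal{A}$ generates $\mathcal{H}$ and that every object of $\mathcal{H}$ is a cokernel of a morphism between coproducts of objects of $\mathcal{A}$, a standard Yoneda computation gives that $\Phi$ is fully faithful. For essential surjectivity, every object of $\mathcal{L}$ is a filtered colimit of representables $\Phi(A_j)$; the transition maps lift through the fully faithful $\Phi$, and $\Phi$ preserves this colimit, so the object is isomorphic to $\Phi(\varinjlim_\mathcal{H}A_j)$. Hence $\Phi$ is an equivalence, and as an equivalence of abelian categories it is automatically exact.

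\textbf{Conclusion and main obstacle.} Transporting along $\Phi$, the category $\mathcal{H}$ inherits from $\mathcal{L}=\text{Lex}(\mathcal{A}^{op},\text{Ab})$ the structure of a locally coherent Grothendieck category; in particular direct limits are exact in $\mathcal{H}$ and $\text{fp}(\mathcal{H})\simeq\text{fp}(\mathcal{L})\simeq\mathcal{A}=\mathcal{H}\cap\mathcal{D}^b(\text{fp}(\mathcal{G}))$. (Alternatively one checks $\text{fp}(\mathcal{H})=\mathcal{A}$ directly: any $Y\in\text{fp}(\mathcal{H})$ is a cokernel of a morphism between finite coproducts of objects of $\mathcal{A}$, and $\mathcal{A}$ is closed under such cokernels by Lemma \ref{lem.restricted heart}.) Local coherence is then exactly the closure of $\text{fp}(\mathcal{H})=\mathcal{A}$ under kernels, again furnished by Lemma \ref{lem.restricted heart}. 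I expect the essential difficulty to lie in the second step: direct limits in $\mathcal{H}$ bear no a priori relation to any (homotopy) colimit in $\mathcal{D}(\mathcal{G})$, so the finite presentation of the objects of $\mathcal{A}$ cannot be read off from their compactness in $\mathcal{D}(\mathcal{G})$; it is precisely condition 4 — the commutation of $\text{Hom}_{\mathcal{D}(\mathcal{G})}(S[k],?)$ with direct limits taken \emph{in $\mathcal{H}$} — together with the compact approximation of Lemma \ref{lem.from bounded to compact} on the homological window supplied by condition 2, that bridges this gap.
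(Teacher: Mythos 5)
Your handling of the crux---that every object of $\mathcal{A}:=\mathcal{H}\cap\mathcal{D}^b(\text{fp}(\mathcal{G}))$ is finitely presented in $\mathcal{H}$---is exactly the paper's argument: the class of objects $W$ for which $\eta_{W[k]}$ is invertible for all $k$ is thick and contains $\mathcal{S}$ by condition 4, hence contains any $P\in\mathcal{C}^b(\text{sum}(\mathcal{S}))$, and the approximation $g\colon P\longrightarrow X$ of Lemma \ref{lem.from bounded to compact} on the homological window supplied by condition 2 transports the isomorphism $\eta_P$ to $\eta_X$. Where you genuinely diverge is in how the Grothendieck (AB5) property of $\mathcal{H}$ is obtained. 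The paper establishes it first and independently of the crux: condition 4 makes the pair $(\coprod_{S\in\mathcal{S}}\text{Hom}_{\mathcal{D}(\mathcal{G})}(S,?),+\infty)$ a cohomological datum for $\mathcal{H}$ in the sense of \cite[Section 3]{PS1}, so that \cite[Proposition 3.4]{PS1} yields AB5; condition 3 then provides a set of generators, and $\text{fp}(\mathcal{H})=\mathcal{A}$ follows from the crux together with Lemma \ref{lem.restricted heart}. You instead prove the crux first and then realize $\mathcal{H}\simeq\text{Lex}(\mathcal{A}^{op},\text{Ab})$ via the restricted Yoneda functor, importing the Grothendieck property, local coherence, and the identification $\text{fp}(\mathcal{H})=\mathcal{A}$ from the classical theory of Lex categories over a skeletally small abelian category (going back to Gabriel and Breitsprecher; see also \cite{He} and \cite{Kr}), a result you should cite explicitly, since the paper's examples only cover the full functor categories $[\mathcal{C},Ab]$, not $\text{Lex}$. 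Your route buys independence from \cite[Proposition 3.4]{PS1} and in fact proves a more general abstract statement (any cocomplete abelian category with a skeletally small abelian exact generating subcategory consisting of finitely presented objects is a locally coherent Grothendieck category with that subcategory as its class of finitely presented objects); the paper's route is shorter given the availability of \cite{PS1}.

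Two details in your sketch need repair, though both are fixable with ingredients you have already established. First, $P$ need not be compact in $\mathcal{D}(\mathcal{G})$: in Setting \ref{setting} the objects of $\mathcal{S}$ are only assumed finitely presented in $\mathcal{G}$ (assertion 3 of Lemma \ref{lem.bounded f.p. homology} requires the extra hypothesis of compact generation); this is harmless, since your argument uses only $P\in\text{thick}_{\mathcal{D}(\mathcal{G})}(\mathcal{S})$, never compactness. Second, and more seriously, the ``standard Yoneda computation'' for full faithfulness does not run naively from the presentation $\coprod_j A_j\longrightarrow\coprod_i A_i\longrightarrow M\rightarrow 0$: pushing this presentation through $\Phi$ would require $\Phi$ to be right exact, whereas a priori $\Phi$ is only left exact and preserves filtered colimits and coproducts. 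The correct intermediate step is that every object of $\mathcal{H}$ is a \emph{filtered} colimit of objects of $\mathcal{A}$: since each $A_j$ is finitely presented in $\mathcal{H}$, each composite $A_j\longrightarrow\coprod_{i\in I}A_i=\varinjlim_{F\subseteq I \text{ finite}}\coprod_{i\in F}A_i$ factors through a finite subcoproduct, so $M$ is the directed colimit, over compatible pairs of finite subsets, of the cokernels of the induced maps between finite subcoproducts, and these cokernels lie in $\mathcal{A}$ because $\mathcal{A}$ is closed under finite coproducts and cokernels by Lemma \ref{lem.restricted heart}. With this presentation, full faithfulness (and your essential surjectivity, which is already phrased in these terms) follows by routine cocone and naturality arguments, using once more that the objects of $\mathcal{A}$ are finitely presented in $\mathcal{H}$.
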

\begin{proof}
Consider the cohomological functor $H':=\coprod_{S\in\mathcal{S}}\text{Hom}_{\mathcal{D}(\mathcal{G})}(S,?):\mathcal{D}(\mathcal{G})\longrightarrow Ab$. Using condition 4 and the fact that $\mathcal{S}$ is a set of  generators of $\mathcal{D}(\mathcal{G})$,  we see that, with the terminology of \cite[Section 3]{PS1},  the pair $(H',+\infty )$ is a cohomological datum in $\mathcal{D}(\mathcal{G})$ for $\mathcal{H}$.  Then \cite[Proposition 3.4]{PS1} says  that $\mathcal{H}$ is an AB5 abelian category. But condition 3 says that it has a set of generators, so that $\mathcal{H}$ is a Grothendieck category. 

 Fix a direct system $(M_i)_{i\in I}$ in $\mathcal{H}$ in the sequel and consider   the full subcategory $\mathcal{C}$ of $\mathcal{D}(\mathcal{G})$ consisting of those complexes $X$ such that $$\eta_{X[k]}:\varinjlim\text{Hom}_{\mathcal{D}(\mathcal{G})}(X[k],M_i)\longrightarrow\text{Hom}_{\mathcal{D}(\mathcal{G})}(X[k],\varinjlim_\mathcal{H}M_i)$$ is an isomorphism, for all $k\in\mathbb{Z}$. Using $5$-Lemma, one readily sees that $\mathcal{C}$ is a thick subcategory of $\mathcal{D}(\mathcal{G})$ which, by condition 4, contains $\mathcal{S}$. We then have $\text{thick}_{\mathcal{D}(\mathcal{G})}(\mathcal{S})\subseteq\mathcal{C}$. In particular, if a complex $X\in\mathcal{C}^b(\text{sum}(\mathcal{S}))$ is viewed as an object of $\mathcal{D}(\mathcal{G})$, then $X\in\mathcal{C}$. 

We now claim that $\eta_X$ is also an isomorphism, for each $X\in\mathcal{D}^b(fp(\mathcal{G}))$. Indeed,   
condition 2 implies that $\mathcal{H}\subseteq\mathcal{D}^{[m,n]}(\mathcal{G})$. Let $X\in\mathcal{D}^b(fp(\mathcal{G}))$ be arbitrary. Replacing $n$ by a larger integer if necessary, we can assume that $X\in\mathcal{D}^{\leq n}(\mathcal{G})$. Then the obvious generalization of Lemma \ref{lem.from bounded to compact} says that there exist a  $P\in\mathcal{C}^b(\text{sum}(\mathcal{S}))$ and a morphism $g:P\longrightarrow X$ in $\mathcal{D}(\mathcal{G})$ such that the natural transformation $g^*:\text{Hom}_{\mathcal{D}(\mathcal{G})}(X,?)\longrightarrow\text{Hom}_{\mathcal{D}(\mathcal{G})}(P,?)$ is an isomorphism when evaluated on objects of $\mathcal{D}^{[m,n]}(\mathcal{G})$. We then have the following commutative diagram
$$\xymatrix{ \varinjlim{ \text{Hom}_{\mathcal{D}(\mathcal{G})}(X,M_i)} \ar[r]^{\eta_X} \ar[d]^{g^{*}}& \text{Hom}_{\mathcal{D}(\mathcal{G})}(X, \varinjlim_{\mathcal{H}}{M_i}) \ar[d]^{g^{*}} \\ \varinjlim{ \text{Hom}_{\mathcal{D}(\mathcal{G})}(P,M_i)} \ar[r]^{\eta_P} & \text{Hom}_{\mathcal{D}(\mathcal{G})}(P, \varinjlim_{\mathcal{H}}{M_i})  }$$
where the vertical arrows are isomorphisms and, due to the previous paragraph, also the lower horizontal arrow is an isomorphism. This settles our claim. In particular, it implies that $\mathcal{H}\cap\mathcal{D}^b(\mathcal{G})$ is a class of finitely presented objects in $\mathcal{H}$ and, by conditions 1 and 3, it is a class of generators of $\mathcal{H}$ (see Remark \ref{rem.tildeH on aisle and co-aisle}). In particular $\mathcal{H}$ is locally finitely presented. Note also that, by condition 1 and  Lemma \ref{lem.restricted heart}, we know that $\mathcal{H}\cap\mathcal{D}^b(\text{fp}(\mathcal{G}))$ is closed under taking cokernels (and kernels) in $\mathcal{H}$. It immediately follows that each finitely presented object of $\mathcal{H}$ is in $\mathcal{H}\cap\mathcal{D}^b(\text{fp}(\mathcal{G}))$ since it is the cokernel of a morphism in this latter category. Then we have that $\mathcal{H}\cap\mathcal{D}^b(\text{fp}(\mathcal{G}))=\text{fp}(\mathcal{H})$, and this is an abelian exact subcategory of $\mathcal{H}$.  Therefore $\mathcal{H}$ is locally coherent.

\end{proof}

\begin{rem}
Condition 1 of last proposition is not necessary for the heart to be a locally coherent Grothendieck category. Indeed, by \cite[Corollary 5.12]{PS3} and using the terminology of that reference, if $R$ is a commutative Noetherian ring and $Z\subsetneq\text{Spec}(R)$ is a perfect sp-subset, then $(\mathcal{U},\mathcal{U}^\perp [1])$ is a t-structure whose heart is equivalent to $R_Z-\text{Mod}$, where $\mathcal{U}$ consists of the complexes $U$ such that $\text{Supp}(H^j(U))\subseteq Z$, for all $j>-1$. Then the heart is locally coherent since $R_Z$ is a noetherian commutative ring. But the associated sp-filtration $\phi =\phi_\mathcal{U}$ of $\text{Spec}(R)$ (see \cite[Section 2.8 and Theorem 3.11]{AJS}) is given by $\phi (i)=\text{Spec}(R)$, for $i\leq -1$, and $\phi (i)=Z$, for all $i>-1$. This sp-filtration does not satisfy in general the weak Cousin condition, in whose case $(\mathcal{U},\mathcal{U}^\perp [1])$ does not restrict to $\mathcal{D}^b(\text{fp}(R-\text{Mod}))\cong\mathcal{D}_{fg}^b(R)$ (see \cite[Corollary 4.5]{AJS}). As an example of last situation, consider $R=\mathbb{Z}$ and $Z=\text{Spec}(\mathbb{Z})\setminus\{0\}$, so that $R_Z=\mathbb{Q}$. We have a canonical triangle $\mathbb{Q}/\mathbb{Z}[-1]\longrightarrow\mathbb{Z}\longrightarrow\mathbb{Q}\stackrel{+}{\longrightarrow}$,  where $\mathbb{Q}/\mathbb{Z}[-1]\in\mathcal{U}$ and $Q\in\mathcal{U}^\perp$. 
\end{rem}

\section{The case of the Happel-Reiten-Smal\o \hspace*{0.1cm}   t-structure}

Recall (see \cite{HRS}) that if $\mathcal{A}$ is any abelian category and $\mathbf{t}=(\mathcal{T},\mathcal{F})$ is a torsion pair in $\mathcal{A}$, then $(\mathcal{U}_\mathbf{t},\mathcal{U}_\mathbf{t}^\perp [1])=(\mathcal{U}_\mathbf{t},\mathcal{V}_\mathbf{t})$ is a t-structure in $\mathcal{D}(\mathcal{A})$, where 

\begin{center}
$\mathcal{U}_\mathbf{t}=\{U\in\mathcal{D}^{\leq 0}(\mathcal{A})\text{: }H^0(U)\in\mathcal{T}\}$

and

$\mathcal{V}_\mathbf{t}=\{V\in\mathcal{D}^{\geq -1}(\mathcal{A})\text{: }H^{-1}(V)\in\mathcal{F}\}$.
\end{center}

This t-structure will be called the \emph{Happel-Reiten-Smal\o \hspace*{0.1cm}(or just HRS) t-structure associated to $\mathbf{t}$}. In this paper we are only interested in the case when $\mathcal{A}=\mathcal{G}$ is a locally coherent Grothendieck category.  

Therefore, all throughout this section, $\mathcal{G}$ will be a locally coherent Grothendieck category and $\mathbf{t}=(\mathcal{T},\mathcal{F})$ will be a torsion pair in $\mathcal{G}$. Recall that $\mathbf{t}$ is said to be of \emph{finite type} when the torsion radical $t:\mathcal{G}\longrightarrow\mathcal{T}$ preserves direct limits or, equivalently, when $\mathcal{F}$ is closed under taking direct limits in $\mathcal{G}$ (see \cite[Section 2]{Kr}). We shall say that $\mathbf{t}$ \emph{restricts to $fp(\mathcal{G})$} when $t(X)$ is in $fp(\mathcal{G})$, for each $X\in fp(\mathcal{G})$. Note that this is equivalent to saying that $\mathbf{t}'=(\mathcal{T}\cap fp(\mathcal{G}),\mathcal{F}\cap fp(\mathcal{G}))$ is a torsion pair in $fp(\mathcal{G})$. 

\begin{prop} \label{prop.restriction of HRS t-structure}
Let $(\mathcal{U}_\mathbf{t},\mathcal{U}_\mathbf{t}^\perp [1])$ be the HRS t-structure in $\mathcal{D}(\mathcal{G})$ associated to  $\mathbf{t}$. The following assertions are equivalent:

\begin{enumerate}
\item The t-structure $(\mathcal{U_\mathbf{t}},\mathcal{U}_\mathbf{t}^\perp [1])$ restricts to $\mathcal{D}^b(\text{fp}(\mathcal{G}))$;
\item The torsion pair $\mathbf{t}$ restricts to $fp(\mathcal{G})$. 
\end{enumerate}
In particular, if $\mathcal{G}$ is locally Noetherian  then $(\mathcal{U_\mathbf{t}},\mathcal{U}_\mathbf{t}^\perp [1])$  restricts to $\mathcal{D}^b(\text{fp}(\mathcal{G}))$. 
\end{prop}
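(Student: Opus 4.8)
The plan is to reduce everything to an explicit description of the two truncation functors of the HRS t-structure and then to read off both implications from how these functors act on homology. Recall first that the co-aisle is $\mathcal{U}_\mathbf{t}^\perp=\{W\in\mathcal{D}^{\geq 0}(\mathcal{G})\colon H^0(W)\in\mathcal{F}\}$ (this follows from $\mathcal{U}_\mathbf{t}^\perp=\mathcal{V}_\mathbf{t}[-1]$). For an arbitrary $X\in\mathcal{D}(\mathcal{G})$ I would construct the truncation triangle $\tau_{\mathcal{U}_\mathbf{t}}(X)\longrightarrow X\longrightarrow\tau^{\mathcal{U}_\mathbf{t}^\perp}(X)\stackrel{+}{\longrightarrow}$ out of the standard one $\tau^{\leq 0}X\longrightarrow X\longrightarrow\tau^{\geq 1}X\stackrel{+}{\longrightarrow}$ as follows: let $\pi$ be the composite $\tau^{\leq 0}X\longrightarrow H^0(X)[0]\longrightarrow((1:t)H^0(X))[0]$, and define $\tau_{\mathcal{U}_\mathbf{t}}(X)$ by the triangle $\tau_{\mathcal{U}_\mathbf{t}}(X)\longrightarrow\tau^{\leq 0}X\stackrel{\pi}{\longrightarrow}((1:t)H^0(X))[0]\stackrel{+}{\longrightarrow}$. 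A short long-exact-sequence computation shows that $\tau_{\mathcal{U}_\mathbf{t}}(X)\in\mathcal{D}^{\leq 0}(\mathcal{G})$ with
$$H^k(\tau_{\mathcal{U}_\mathbf{t}}(X))=\begin{cases}H^k(X)& k<0\\ t(H^0(X))& k=0\\ 0& k>0,\end{cases}$$
so it belongs to $\mathcal{U}_\mathbf{t}$; an octahedron applied to $\tau_{\mathcal{U}_\mathbf{t}}(X)\to\tau^{\leq 0}X\to X$ then identifies the cone $\tau^{\mathcal{U}_\mathbf{t}^\perp}(X)$ as the object of $\mathcal{D}^{\geq 0}(\mathcal{G})$ with $H^0=(1:t)H^0(X)\in\mathcal{F}$ and $H^k=H^k(X)$ for $k\geq 1$, hence in $\mathcal{U}_\mathbf{t}^\perp$. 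By uniqueness of truncation triangles this is the genuine HRS decomposition, and for a stalk $X[0]$ it collapses to the shifted torsion sequence $(tX)[0]\longrightarrow X[0]\longrightarrow((1:t)X)[0]\stackrel{+}{\longrightarrow}$.

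For the implication $(1)\Rightarrow(2)$ I would feed a finitely presented object $X$, viewed as the stalk $X[0]\in\mathcal{D}^b(\text{fp}(\mathcal{G}))$, into the restriction hypothesis. By the last remark its truncation triangle is the shifted torsion sequence, so restriction forces $(tX)[0]\in\mathcal{D}^b(\text{fp}(\mathcal{G}))\cong\mathcal{D}^b_{fp}(\mathcal{G})$; reading homology in degree $0$ gives $tX\in\text{fp}(\mathcal{G})$. Since $X$ was an arbitrary object of $\text{fp}(\mathcal{G})$, this is exactly the statement that $\mathbf{t}$ restricts to $\text{fp}(\mathcal{G})$.

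For $(2)\Rightarrow(1)$ I would start from an arbitrary $X\in\mathcal{D}^b(\text{fp}(\mathcal{G}))$, so all $H^k(X)$ are finitely presented and almost all vanish. From the displayed homology formulas, the only homology objects of $\tau_{\mathcal{U}_\mathbf{t}}(X)$ and $\tau^{\mathcal{U}_\mathbf{t}^\perp}(X)$ not already among the $H^k(X)$ are $t(H^0(X))$ and $(1:t)H^0(X)$. The hypothesis that $\mathbf{t}$ restricts to $\text{fp}(\mathcal{G})$ gives $t(H^0(X))\in\text{fp}(\mathcal{G})$, and then $(1:t)H^0(X)=\text{Coker}(t(H^0(X))\hookrightarrow H^0(X))$ is finitely presented because $\text{fp}(\mathcal{G})$ is an abelian exact subcategory of $\mathcal{G}$. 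Hence both truncations have bounded finitely presented homology, i.e. lie in $\mathcal{D}^b_{fp}(\mathcal{G})\cong\mathcal{D}^b(\text{fp}(\mathcal{G}))$, which is precisely the definition of the t-structure restricting to $\mathcal{D}^b(\text{fp}(\mathcal{G}))$.

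Finally, for the ``in particular'' assertion I would use that a locally coherent and locally noetherian Grothendieck category has $\text{fp}(\mathcal{G})$ equal to the class of noetherian objects (recalled in Section 2). Since $tX$ is a subobject of $X$ and subobjects of noetherian objects are noetherian, $X\in\text{fp}(\mathcal{G})$ forces $tX\in\text{fp}(\mathcal{G})$; thus $\mathbf{t}$ always restricts to $\text{fp}(\mathcal{G})$ and the established equivalence applies. The only genuinely delicate point is the first paragraph: one must check carefully, through the long exact homology sequences and the octahedral axiom, that the cone of $\pi$ really realizes the aisle truncation and that the complementary cone lands in the co-aisle. Everything after that is bookkeeping with homology, for which the identification $\mathcal{D}^b(\text{fp}(\mathcal{G}))\cong\mathcal{D}^b_{fp}(\mathcal{G})$ from the discussion following Lemma \ref{lem.bounded f.p. homology} does the heavy lifting.
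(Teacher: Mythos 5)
Your proof is correct and takes essentially the same route as the paper: both construct the HRS truncation triangle explicitly from a canonical truncation of $X$ together with the torsion sequence of $H^0(X)$, glued by the octahedral axiom, and then reduce the restriction condition to $t(H^0(X))$ being finitely presented (with the identification $\mathcal{D}^b(\text{fp}(\mathcal{G}))\cong\mathcal{D}^b_{fp}(\mathcal{G})$ and, for the final assertion, the same observation that $\text{fp}(\mathcal{G})=\text{noeth}(\mathcal{G})$ is closed under subobjects). The only cosmetic difference is the order of construction: you produce the aisle truncation first, as the cone killing $((1:t)H^0(X))[0]$ off $\tau^{\leq 0}X$, whereas the paper first modifies $\tau^{\geq 0}X$ by $t(H^0(X))[0]$ to get the co-aisle vertex $W$ and then identifies the aisle vertex $U$ by the octahedron.
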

\begin{proof}
Given $M\in\mathcal{D}^b(\text{fp}(\mathcal{G}))$, we have canonical triangles in $\mathcal{D}(\mathcal{G})$

\begin{center}

$\tau^{\leq -1}M\longrightarrow M\longrightarrow\tau^{\geq 0}M\stackrel{+}{\longrightarrow}$

$t(H^0(M))[0]\longrightarrow\tau^{\geq 0}M\longrightarrow W\stackrel{+}{\longrightarrow}$,
\end{center}
where $W\in\mathcal{D}^{\geq 0}(\mathcal{G})$,  $H^0(W)\cong\frac{H^0(M)}{t(H^0(M))}$ and $H^k(W)=H^k(M)$, for all $k>0$. Then $W\in\mathcal{U}_\mathbf{t}^\perp =\mathcal{V}_\mathbf{t}[-1]$. Applying the octhaedrom axiom to the last two triangles, we  obtain two new triangles

\begin{center}
$\tau^{\leq -1}M\longrightarrow U\longrightarrow t(H^0(M))[0]\stackrel{+}{\longrightarrow}$

$U\longrightarrow M\longrightarrow W\stackrel{+}{\longrightarrow}$.
\end{center}
It follows from the first triangle that $U\in\mathcal{U}_\mathbf{t}$ since the outer vertices of the triangle are in $\mathcal{U}_\mathbf{t}$. We then conclude that the second triangle is precisely the truncation triangle of $M$ with respect to $(\mathcal{U}_\mathbf{t},\mathcal{U}_\mathbf{t}^\perp [1])$. 

The last truncation triangle is in $\mathcal{D}^b(\text{fp}(\mathcal{G}))$ if, and only if, $U\in\mathcal{D}^b(\text{fp}(\mathcal{G}))$. But this happens exactly when $t(H^0(M))[0]\in\mathcal{D}^b(\text{fp}(\mathcal{G}))$. That is, exactly when $t(H^0(M))$ is a finitely presented object. 
The equivalence of assertions 1 and 2 is now clear. 

Noting that $\mathcal{G}$ is locally coherent all throughout this section, when $\mathcal{G}$ is also locally noetherian we have that $\text{fp}(\mathcal{G})$ coincides with the class $\text{noeth}(\mathcal{G})$ of noetherian objects, which is obviously closed under taking subobjects. Therefore $\mathbf{t}$ always restricts to $\text{fp}(\mathcal{G})$. 
\end{proof}

We are now ready to prove the first main result of the paper.

\begin{teor} \label{teor.HRS}
Let $\mathcal{G}$ be a locally coherent Grothendieck category, let $\mathbf{t}=(\mathcal{T},\mathcal{F})$ be a torsion pair in $\mathcal{G}$, let $(\mathcal{U}_\mathbf{t},\mathcal{U}_\mathbf{t}^\perp [1])$ be the associated t-structure in $\mathcal{D}(\mathcal{G})$ and let $\mathcal{H}_\mathbf{t}$ be its heart. The following assertions are equivalent:

\begin{enumerate}
\item $(\mathcal{U}_\mathbf{t},\mathcal{U}_\mathbf{t}^\perp [1])$ restricts to $\mathcal{D}^b(\text{fp}(\mathcal{G}))$ and $\mathcal{H}_\mathbf{t}$ is a locally coherent Grothendieck category (with $\mathcal{H}_\mathbf{t}\cap\mathcal{D}^b(\text{fp}(\mathcal{G})$ as the class of finitely presented objects). 
\item $\mathbf{t}$ is of finite type and restricts to $fp(\mathcal{G})$.
\item There exists a torsion pair $\mathbf{t}'=(\mathcal{T}',\mathcal{F}')$ in $fp(\mathcal{G})$ such that $\mathbf{t}=(\varinjlim\mathcal{T}',\varinjlim\mathcal{F}')$. 

When in addition $\mathcal{G}$ is locally Noetherian, these assertions are also equivalent to:

\item $\mathbf{t}$ is of finite type.
\end{enumerate}
\end{teor}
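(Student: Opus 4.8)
The plan is to obtain $(1)\Leftrightarrow(2)$ by matching the hypotheses of $(2)$ against Propositions \ref{prop.restriction of HRS t-structure} and \ref{prop.locally coherent Grothendieck heart}, to settle $(2)\Leftrightarrow(3)$ by a direct-limit computation, and to read off $(4)$ in the Noetherian case. For the implication $(2)\Rightarrow(1)$ I would verify the four conditions of Proposition \ref{prop.locally coherent Grothendieck heart} for the HRS t-structure. Condition~1 is precisely Proposition \ref{prop.restriction of HRS t-structure}, using that $\mathbf{t}$ restricts to $fp(\mathcal{G})$. Condition~2 holds with $m=-1$ and $n=0$, since by definition $\mathcal{U}_\mathbf{t}\subseteq\mathcal{D}^{\leq 0}(\mathcal{G})$ while every $U\in\mathcal{D}^{\leq-1}(\mathcal{G})$ has $H^0(U)=0\in\mathcal{T}$ and so lies in $\mathcal{U}_\mathbf{t}$. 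The substance is therefore in conditions~3 and~4, and this is exactly where the finite-type hypothesis enters.

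For condition~4, fix a direct system $(M_i)$ in $\mathcal{H}_\mathbf{t}$ and an $S\in\mathcal{S}$. Writing $T_i=H^0(M_i)\in\mathcal{T}$ and $F_i=H^{-1}(M_i)\in\mathcal{F}$ and using the triangle $F_i[1]\to M_i\to T_i[0]\stackrel{+}{\to}$ together with the identification $\Hom_{\mathcal{D}(\mathcal{G})}(S[k],Y[j])=\Ext^{\,j-k}_\mathcal{G}(S,Y)$, each $\Hom_{\mathcal{D}(\mathcal{G})}(S[k],M_i)$ sits in a finite exact sequence built from the groups $\Ext^\ast_\mathcal{G}(S,T_i)$ and $\Ext^\ast_\mathcal{G}(S,F_i)$. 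Because $S$ is finitely presented and $\mathcal{G}$ is locally coherent, Proposition \ref{prop.monicExt for locally f.p.}(2) lets every such $\Ext^\ast_\mathcal{G}(S,-)$ commute with direct limits in $\mathcal{G}$. Applying the exact functor $\varinjlim$ to these sequences and comparing with the sequence for $\varinjlim_{\mathcal{H}}M_i$ through the five lemma, condition~4 reduces to the claim that direct limits in $\mathcal{H}_\mathbf{t}$ are computed on homology, namely $H^{-1}(\varinjlim_{\mathcal{H}}M_i)\cong\varinjlim^{\mathcal{G}}F_i$ and $H^0(\varinjlim_{\mathcal{H}}M_i)\cong\varinjlim^{\mathcal{G}}T_i$. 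The identity for $H^0$ is automatic, since $H^0:\mathcal{H}_\mathbf{t}\to\mathcal{G}$ is right exact and compatible with coproducts; the identity for $H^{-1}$, together with the fact that $\varinjlim^{\mathcal{G}}F_i$ again lies in $\mathcal{F}$ so that the candidate complex is again an object of $\mathcal{H}_\mathbf{t}$, is exactly what finite type delivers (there $\mathcal{F}$ is closed under direct limits, hence under coproducts). Establishing this homological control of direct limits in $\mathcal{H}_\mathbf{t}$ is, I expect, the main obstacle, and I would carry it out following the torsion-pair analysis of \cite{PS1}.

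For condition~3, I would cover an arbitrary $N\in\mathcal{H}_\mathbf{t}$ using the tilted torsion pair $(\mathcal{F}[1],\mathcal{T}[0])$ in $\mathcal{H}_\mathbf{t}$ and the canonical sequence $0\to H^{-1}(N)[1]\to N\to H^0(N)[0]\to 0$. Since by $(2)\Rightarrow(3)$ both $H^{-1}(N)\in\mathcal{F}$ and $H^0(N)\in\mathcal{T}$ are direct limits of objects of $\mathcal{F}\cap fp(\mathcal{G})$ and $\mathcal{T}\cap fp(\mathcal{G})$, one gets epimorphisms in $\mathcal{G}$ onto them from coproducts of such objects; the task is to transport these into $\mathcal{H}_\mathbf{t}$ via the stalk functors $[1]$ and $[0]$ and splice them along the torsion sequence to obtain an epimorphism onto $N$ from a coproduct of objects of $\mathcal{H}_\mathbf{t}\cap\mathcal{D}^b(fp(\mathcal{G}))$. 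One checks that an epimorphism $A\twoheadrightarrow B$ in $\mathcal{G}$ with $A,B\in\mathcal{F}$ induces an epimorphism $A[1]\to B[1]$ in $\mathcal{H}_\mathbf{t}$ (its $\mathcal{H}_\mathbf{t}$-cokernel comes from a complex concentrated in degree $\leq-2$ and is killed by $\tilde H$), and handles the torsion-free part $H^0(N)[0]$, whose covering is the delicate point, by exploiting that $\mathcal{H}_\mathbf{t}\cap\mathcal{D}^b(fp(\mathcal{G}))$ contains all genuine two-term objects with finitely presented homology and not merely stalks. The class $\mathcal{H}_\mathbf{t}\cap\mathcal{D}^b(fp(\mathcal{G}))$ is skeletally small because $fp(\mathcal{G})$ is. Granting conditions 1--4, Proposition \ref{prop.locally coherent Grothendieck heart} yields $(1)$.

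It remains to treat $(1)\Rightarrow(2)$ and $(2)\Leftrightarrow(3)$. For $(1)\Rightarrow(2)$, restriction to $\mathcal{D}^b(fp(\mathcal{G}))$ forces $\mathbf{t}$ to restrict to $fp(\mathcal{G})$ by Proposition \ref{prop.restriction of HRS t-structure}, while the heart being Grothendieck, hence AB5, forces $\mathbf{t}$ to be of finite type; the latter I would take from the Parra--Saor\'in characterization in \cite{PS1} (in the contrapositive, failure of finite type destroys exactness of direct limits in $\mathcal{H}_\mathbf{t}$). For $(2)\Rightarrow(3)$ set $\mathbf{t}'=(\mathcal{T}\cap fp(\mathcal{G}),\mathcal{F}\cap fp(\mathcal{G}))$: finite type makes both $t$ and $(1:t)$ preserve direct limits, so applying them to a presentation $X=\varinjlim X_i$ with $X_i\in fp(\mathcal{G})$ exhibits every object of $\mathcal{T}$, resp. $\mathcal{F}$, as a direct limit of objects of $\mathcal{T}'$, resp. $\mathcal{F}'$, whence $\mathbf{t}=(\varinjlim\mathcal{T}',\varinjlim\mathcal{F}')$. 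The converse $(3)\Rightarrow(2)$ is routine: $\varinjlim\mathcal{F}'$ is closed under direct limits, giving finite type, while a finitely presented object of $\varinjlim\mathcal{T}'$ is a retract of some object of $\mathcal{T}'$, giving $\mathcal{T}\cap fp(\mathcal{G})=\mathcal{T}'$ and hence the restriction. Finally, in the locally Noetherian case $fp(\mathcal{G})$ is the class of noetherian objects, which is closed under subobjects, so every torsion pair restricts to $fp(\mathcal{G})$; condition $(2)$ then collapses to finite type, that is, to $(4)$.
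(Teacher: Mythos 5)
Your overall route coincides with the paper's: $(1)\Rightarrow(2)$ via Proposition \ref{prop.restriction of HRS t-structure} and the finite-type criterion of \cite[Theorem 4.8]{PS1}; $(2)\Leftrightarrow(3)$ by elementary direct-limit arguments (the paper invokes \cite[Lemma 4.4]{CB} plus an orthogonality computation where you argue with $t$ and $(1:t)$ preserving direct limits --- both work, and yours is if anything more self-contained); and $(2)\Rightarrow(1)$ by checking the four conditions of Proposition \ref{prop.locally coherent Grothendieck heart}. Your verification of condition 4 is exactly the paper's: compare the long exact sequences coming from the triangles $H^{-1}(M_i)[1]\to M_i\to H^0(M_i)[0]\stackrel{+}{\longrightarrow}$ via Proposition \ref{prop.monicExt for locally f.p.} and the five lemma, the key input being that direct limits in $\mathcal{H}_\mathbf{t}$ are computed on homology; the exact sequence $0\to(\varinjlim H^{-1}(M_i))[1]\to\varinjlim_{\mathcal{H}_\mathbf{t}}M_i\to(\varinjlim H^0(M_i))[0]\to 0$ you propose to extract from the torsion-pair analysis is precisely \cite[Proposition 4.2]{PS1}, valid under finite type, and Grothendieck-ness of $\mathcal{H}_\mathbf{t}$ (needed to manipulate these limits) is \cite[Theorem 1.2]{PS2}, which the paper quotes at the start.

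The genuine gap is condition 3. As you half-acknowledge, the splice cannot be performed as you describe: given the sequence $0\to H^{-1}(N)[1]\to N\to H^0(N)[0]\to 0$ in $\mathcal{H}_\mathbf{t}$ and epimorphisms onto its outer terms, an epimorphism onto the quotient $H^0(N)[0]$ need not lift along $N\to H^0(N)[0]$, so no epimorphism onto $N$ results; and observing that $\mathcal{H}_\mathbf{t}\cap\mathcal{D}^b(\text{fp}(\mathcal{G}))$ consists of all two-term objects with finitely presented homology identifies the right class of covers but produces no map onto $N$. The paper resolves this in two steps. First, it writes $H^0(M)=\varinjlim T_i$ with $T_i\in\mathcal{T}\cap\text{fp}(\mathcal{G})$ and \emph{pulls back} (rather than lifts) the canonical sequence along each $T_i[0]\to H^0(M)[0]$, getting a direct system $(M_i)$ with $M=\varinjlim_{\mathcal{H}_\mathbf{t}}M_i$ (Grothendieck-ness used here), hence an epimorphism from $\coprod M_i$; this reduces to the case where $H^0(M)$, and then the degree-zero term $M^0$, is finitely presented. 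Second, it adapts the explicit construction of \cite[Proposition 4.7]{PS1} to manufacture epimorphisms onto $M$ from objects $L(K_F)$ represented by two-term complexes with terms $\coprod_{j\in F}X_j/t(U_F)$ and $M_F^0$, and it is exactly here that the hypothesis that $\mathbf{t}$ restricts to $\text{fp}(\mathcal{G})$ is used a second time, to guarantee that $t(U_F)$, hence the degree $-1$ term, is finitely presented. Tellingly, that hypothesis never enters your condition-3 sketch beyond Proposition \ref{prop.restriction of HRS t-structure} --- a sign that the sketch cannot be complete as it stands. Your side remarks are correct (an epimorphism $A\twoheadrightarrow B$ in $\mathcal{G}$ with $A,B\in\mathcal{F}$ gives an epimorphism $A[1]\to B[1]$ in the heart, since the relevant cone lies in $\mathcal{D}^{\leq -2}(\mathcal{G})$ and is killed by $\tilde{H}$; and $H^0$ preserves direct limits for formal reasons), but the generation statement is the real content of $(2)\Rightarrow(1)$ and remains unproved in your attempt.
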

\begin{proof}
All throughout the proof, we fix a set $\mathcal{S}$ of finitely presented generators of $\mathcal{G}$. 

$1)\Longrightarrow 2)$ By Proposition \ref{prop.restriction of HRS t-structure}, we know that $\mathbf{t}$ restricts to $fp(\mathcal{G})$ and, by \cite{PS1}[Theorem 4.8], we know that $\mathbf{t}$ is of finite type.

$2)\Longrightarrow 3)$ If we put $\mathcal{T}'=\mathcal{T}\cap\text{fp}(\mathcal{G})$ and $\mathcal{F}'=\mathcal{F}\cap\text{fp}(\mathcal{G})$, then $\mathbf{t}'=(\mathcal{T}',\mathcal{F}')$ is a torsion pair in $\text{fp}(\mathcal{G})$ since $\mathbf{t}$ restricts to $\text{fp}(\mathcal{G})$. By \cite{CB}[Lemma 4.4], we know that $(\varinjlim\mathcal{T}',\varinjlim\mathcal{F}')$ is a torsion pair in $\mathcal{G}$. But $\mathcal{T}$ and $\mathcal{F}$ are closed under taking direct limits in $\mathcal{G}$, which implies that $\varinjlim \mathcal{T}'\subseteq\mathcal{T}$ and $\varinjlim\mathcal{F}'\subseteq\mathcal{F}$. Since we always have $\mathcal{F}=\mathcal{T}^\perp\subseteq(\varinjlim\mathcal{T}')^\perp=\varinjlim\mathcal{F}'$ we conclude that $(\mathcal{T},\mathcal{F})=(\varinjlim\mathcal{T}',\varinjlim\mathcal{F}')$. 

$3)\Longrightarrow 2)$ is clear.

$2)\Longrightarrow 1)$ The finite type condition of $\mathbf{t}$ implies that $\mathcal{H}_\mathbf{t}$ is a Grothendieck category (see \cite[Theorem 1.2]{PS2}). Let now $(M_i)_{i\in I}$ be a direct system in $\mathcal{H}_\mathbf{t}$. Bearing in mind that $\mathcal{F}$ is closed under taking direct limits in $\mathcal{G}$ and using \cite[Proposition 4.2]{PS1}, we get an exact sequence in $\mathcal{H}_\mathbf{t}$: 

\begin{center}
$0\rightarrow (\varinjlim H^{-1}(M_i))[1]\longrightarrow\varinjlim_{\mathcal{H}_\mathbf{t}}M_i\longrightarrow (\varinjlim H^0(M_i))[0]\rightarrow 0.$
\end{center}

To abbreviate, let us put $(X,Y)=\text{Hom}_{\mathcal{D}(\mathcal{G})}(X,Y)$, for all $X,Y\in\mathcal{D}(\mathcal{G})$. Then, for each $S\in\mathcal{S}$ and each $k\in\mathbb{Z}$, we have a commutative diagram of abelian groups with exact columns, where the horizontal arrows are the canonical morphisms:


$$\xymatrix @d {\varinjlim (S[k],H^0(M_i)[-1])\ar[r] \ar[d] & \varinjlim (S[k],H^{-1}(M_i)[1])\ar[r] \ar[d] & \varinjlim (S[k],M_i) \ar[r] \ar[d] & \varinjlim (S[k],H^0(M_i)[0]) \ar[d] \ar[r] & \varinjlim(S[k],H^{-1}(M_i)[2]) \ar[d] \\ (S[k],(\varinjlim H^{0}(M_i))[-1])\ar[r] & (S[k],(\varinjlim H^{-1}(M_i))[1]) \ar[r] & (S[k],\varinjlim_{\mathcal{H}_\mathbf{t}}M_i) \ar[r] & (S[k],(\varinjlim H^{0}(M_i))[0]) \ar[r] & (S[k],(\varinjlim H^{-1}(M_i))[2])  }$$

By Proposition \ref{prop.monicExt for locally f.p.}, we have that the two upper most and the two lower most horizontal  arrows are isomorphisms, which implies that also the canonical map $\varinjlim (S[k],M_i)\longrightarrow (S[k],\varinjlim_{\mathcal{H}_\mathbf{t}}M_i)$ is an isomorphism. 

 We will check now that all conditions 1-4 of Proposition \ref{prop.locally coherent Grothendieck heart} are satisfied by $(\mathcal{U}_\mathbf{t},\mathcal{U}_\mathbf{t}[1])$.   By Proposition \ref{prop.restriction of HRS t-structure}, we know that $(\mathcal{U}_\mathbf{t},\mathcal{U}_\mathbf{t}^\perp [1])$ restricts to $\mathcal{D}^b(fp(\mathcal{G}))$ and, by definition of the HRS t-structure, we know that $\mathcal{D}^{\leq -1}(\mathcal{G})\subseteq\mathcal{U}_\mathbf{t}\subseteq\mathcal{D}^{\leq 0}(\mathcal{G})$, so that conditions 1 and 2 of the mentioned proposition hold. Moreover, the previous paragraph says that also condition 4  holds.

We will finally check that each object of $\mathcal{H}_\mathbf{t}$ is an epimorphic image of a coproduct of objects of $\mathcal{H}_\mathbf{t}\cap\mathcal{D}^b(\text{fp}(\mathcal{G}))$, which will give condition 3 of Proposition \ref{prop.restriction of HRS t-structure} and will end the proof. Let $M$ be any object of $\mathcal{H}_\mathbf{t}$ and let us write $H^0(M)=\varinjlim T_i$, for some direct system $(T_i)_{i\in I}$ in $\mathcal{T}\cap fp(\mathcal{G})$. Note that this is possible since $\mathcal{T}=\varinjlim (\mathcal{T}\cap fp(\mathcal{G}))$. Considering the canonical exact sequence $0\rightarrow H^{-1}(M)[1]\longrightarrow M\longrightarrow H^0(M)[0]\rightarrow 0$ and pulling it back, for each $i\in I$,  along the obvious map $T_i[0]\longrightarrow H^0(M)[0]$, we get a direct system of exact sequences in $\mathcal{H}_\mathbf{t}$ 
$$0\rightarrow H^{-1}(M)[1]\longrightarrow M_i\longrightarrow T_i[0]\rightarrow 0. $$ Since $\mathcal{H}_{\mathbf{t}}$ is a Grothendieck category it immediately follows that $M=\varinjlim_{\mathcal{H}_\mathbf{t}}M_i$, so that $M$ is an epimorphic image of $\coprod_{i\in I}M_i$. Replacing $M$ by any of the $M_i$, we can and shall assume in the rest of the proof that $H^0(M)\in\mathcal{T}\cap fp(\mathcal{G})$. We then write $M$ as a complex $\cdots 0\longrightarrow M^{-1}\longrightarrow M^0\longrightarrow 0 \cdots $ concentrated in degrees $-1$ and $0$. Note that if we put $M^0=\varinjlim M^0_i$, where $(M^0_i)_{i\in I}$ is a direct system in $fp (\mathcal{G})$, then some composition $M^0_j\stackrel{\iota_j}{\longrightarrow}\varinjlim M_i^0=M^0\stackrel{p}{\twoheadrightarrow}H^0(M)$ should be an epimorphism, because $H^0(M)$ is finitely presented. Replacing $M^0$ by $M_j^0$ if necessary, we can assume in the sequel that $M^0$ is also finitely presented. 

Once we assume that $H^0(M)$ and $M^0$ are both finitely presented, we follow the lines of the proof of \cite[Proposition 4.7]{PS1} with an easy adaptation. The details are left to the reader.  
Since $M^{-1}$ is a direct limit of finitely presented objects, we can fix an epimorphism $\coprod_{j\in J}X_j\twoheadrightarrow M^{-1}$ in $\mathcal{G}$, where $X_j\in fp(\mathcal{G})$ for all $j\in J$. Now we construct a 4-row commutative diagram as in the mentioned proof, where $G^{(J)}$ and $G^{(F)}$ are replaced in our case by $\coprod_{j\in J}X_j$ and $\coprod_{j\in F}X_j$, respectively. The key point now is that the appearing $U_F$ and $X_F$ are finitely presented objects. Since $\mathbf{t}$ restricts to $fp(\mathcal{G})$, we also know that $t(X_F)$ (and also $M_F^0$) is finitely presented, for each finite subset $F\subseteq J$. If now $L=\tilde{H}_{| \mathcal{U}_\mathbf{t}}:\mathcal{U}_\mathbf{t}\longrightarrow\mathcal{H}_\mathbf{t}$ is the left adjoint to inclusion functor (see \cite[Lemma 3.1]{PS1}), the mentioned proof shows that we have epimorphisms $\coprod_{F\subset J,F\text{ }finite}L(K_F)\twoheadrightarrow L(K_J)$ and $L(K_J)\twoheadrightarrow M$ in $\mathcal{H}_\mathbf{t}$, where $L(K_F)$ is the object of $\mathcal{H}_\mathbf{t}$ represented by the complex $\cdots 0\longrightarrow\frac{\coprod_{j\in F}X_j}{t(U_F)}\longrightarrow M^0_F\longrightarrow 0 \cdots $, concentrated in degrees $-1$ and $0$. But $t(U_F)$ is finitely presented, because so is $U_F$. It follows that the latter complex is a complex of finitely presented objects, and hence $L(K_F)\in\mathcal{H}_\mathbf{t}\cap\mathcal{D}^b(\text{fp}(\mathcal{G}))$. 

$4)\Longrightarrow 2)=3)$ If $\mathcal{G}$ is locally Noetherian, each torsion pair restricts to its subcategory of noetherian objects, that is, to $\text{fp}(\mathcal{G})$. 
\end{proof}

\section{The heart of a restricted t-structure in the derived category of a commutative noetherian ring}

All throughout this section $R$ is a commutative noetherian ring. To apply the results of earlier sections, we will consider $\mathcal{G}=R-\text{Mod}$ the category of all $R$-modules, which is a locally noetherian Grothendieck category.  Then we have that $fp(\mathcal{G})=R-\text{mod}$ is the subcategory of finitely generated $R$-modules and, as usual (see comments preceding Definition \ref{def.fp-injective}), we identify $\mathcal{D}^b_{fg}(R):=\mathcal{D}^b_{fp}(R-\text{Mod})$ with $\mathcal{D}^b(R-\text{mod})$. 

Recall that a \emph{filtration by supports} or \emph{sp-filtration} of $\text{Spec}(R)$ is a decreasing map $\phi:\mathbb{Z}\longrightarrow\mathcal{P}(\text{Spec}(R))$ such that $\phi(i)\subseteq \text{Spec}(R)$ is a stable under specialization subset,  for each $i\in \mathbb{Z}$. Filtrations by supports turn out to be in bijection with the compactly generated t-structures in $\mathcal{D}(R)$ (see \cite[Theorem 3.11]{AJS}). Concretely, given an sp-filtration $\phi$ and putting $\mathcal{U}_\phi =\{U\in\mathcal{D}(R)\text{: }\text{Supp}(H^i(U))\subseteq\phi (i)\text{, for all }i\in\mathbb{Z}\}$, we get a compactly generated t-structure $\tau_\phi =(\mathcal{U}_\phi ,\mathcal{U}_\phi^\perp [1])$ and the assignment $\phi\rightsquigarrow\tau_\phi$ gives the mentioned bijection. All through this section, the reader is referred to \cite{AJS} for all non-defined terms that we might use.

\begin{lema} \label{lem.Hom under localization}
Let $X\in\mathcal{D}^b_{fg}(R)$ and $Y\in\mathcal{D}^+(R)$. For each $\mathbf{p}\in\text{Spec}(R)$, the canonical map

\begin{center}
$\text{Hom}_{\mathcal{D}(R)}(X,Y)_\mathbf{p}\longrightarrow\text{Hom}_{\mathcal{D}(R_\mathbf{p})}(X_\mathbf{p},Y_\mathbf{p})$ 
\end{center}
is an isomorphism. 
\end{lema}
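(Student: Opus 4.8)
The plan is to reduce the statement to the standard commutative-algebra fact that localization commutes with $\text{Hom}$ out of a finitely presented module, and then to propagate this fact through a resolution and the $H^0$ functor. First I would replace $X$ and $Y$ by convenient honest complexes. Since $R$ is noetherian and $X\in\mathcal{D}^b_{fg}(R)\subseteq\mathcal{D}^-_{fg}(R)$, there is a quasi-isomorphism $P\longrightarrow X$ in which $P$ is a bounded-above complex of finitely generated free $R$-modules (built degree by degree by lifting finitely generated free covers of the cohomology, using that kernels of maps between finitely generated modules over a noetherian ring are again finitely generated). A bounded-above complex of projectives is K-projective, so $\text{Hom}^\bullet_R(P,Y)$ computes $\mathbf{R}\text{Hom}_R(X,Y)$; in particular $H^0(\text{Hom}^\bullet_R(P,Y))=\text{Hom}_{\mathcal{D}(R)}(X,Y)$. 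Moreover, since $Y\in\mathcal{D}^+(R)$, a canonical truncation lets me assume $Y$ is an honest complex with $Y^k=0$ for $k\ll 0$.

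The core of the argument is to compute both sides through this $\text{Hom}$-complex and compare degreewise. For each fixed $n$ the term $\text{Hom}^\bullet_R(P,Y)^n=\prod_i \text{Hom}_R(P^i,Y^{n+i})$ is a \emph{finite} product, because $P$ is bounded above (so $P^i=0$ for $i\gg 0$) and $Y$ is bounded below (so $Y^{n+i}=0$ for $i\ll 0$), leaving only finitely many contributing indices $i$. Now $?_\mathbf{p}=R_\mathbf{p}\otimes_R?$ is exact and commutes with finite products, and for each finitely presented module $P^i$ (over a noetherian ring finitely generated equals finitely presented) one has the canonical isomorphism $\text{Hom}_R(P^i,Y^{n+i})_\mathbf{p}\cong\text{Hom}_{R_\mathbf{p}}((P^i)_\mathbf{p},(Y^{n+i})_\mathbf{p})$. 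Assembling these over the finite index set gives a natural isomorphism of complexes $\left(\text{Hom}^\bullet_R(P,Y)\right)_\mathbf{p}\cong\text{Hom}^\bullet_{R_\mathbf{p}}(P_\mathbf{p},Y_\mathbf{p})$.

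Finally I would identify the right-hand side intrinsically. Localization is exact, so $P_\mathbf{p}\longrightarrow X_\mathbf{p}$ is still a quasi-isomorphism and $P_\mathbf{p}$ is again a bounded-above complex of finitely generated free, hence K-projective, $R_\mathbf{p}$-modules; therefore $\text{Hom}^\bullet_{R_\mathbf{p}}(P_\mathbf{p},Y_\mathbf{p})$ computes $\mathbf{R}\text{Hom}_{R_\mathbf{p}}(X_\mathbf{p},Y_\mathbf{p})$. Taking $H^0$ and using once more that exactness of localization lets it pass through cohomology, I obtain
$$\text{Hom}_{\mathcal{D}(R)}(X,Y)_\mathbf{p}=H^0\!\left(\text{Hom}^\bullet_R(P,Y)\right)_\mathbf{p}\cong H^0\!\left(\text{Hom}^\bullet_{R_\mathbf{p}}(P_\mathbf{p},Y_\mathbf{p})\right)=\text{Hom}_{\mathcal{D}(R_\mathbf{p})}(X_\mathbf{p},Y_\mathbf{p}),$$
and a diagram chase confirms that the composite is the canonical comparison map.

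The hard part — and the only place where both boundedness hypotheses are genuinely used — is the degreewise finiteness of the $\text{Hom}$-complex, since that is precisely what makes the exact functor $?_\mathbf{p}$ commute with the relevant products (infinite products do not in general commute with localization). Should one prefer to avoid resolutions altogether, an alternative is a dévissage on $X$: using the canonical truncation triangles of $X$ and the five lemma (both sides being cohomological in $X$), one reduces to the case where $X$ is a single finitely generated module concentrated in one degree, and then to a finite free presentation, invoking the finitely presented case of the commutative-algebra isomorphism directly.
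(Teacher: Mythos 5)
Your proof is correct, but it takes a genuinely different route from the paper's. You localize a K-projective resolution of $X$; the paper instead localizes an injective resolution of $Y$ and performs a d\'evissage on $X$: it fixes $Y$ as a bounded-below complex of injective $R$-modules, notes that the objects $Z\in\mathcal{D}^b_{fg}(R)$ for which all the maps $\eta_{Z[k]}$ are isomorphisms form a thick subcategory, and reduces --- since any $Z\in\mathcal{D}^b_{fg}(R)$ is a finite iterated extension of its stalk complexes $H^{-k}(Z)[k]$ --- to the case $X=M[0]$ with $M$ finitely generated, where both sides are computed as homology of $\text{Hom}_R(M,Y)$, resp.\ $\text{Hom}_{R_\mathbf{p}}(M_\mathbf{p},Y_\mathbf{p})$, the module case being quoted from \cite{Ku}. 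The pressure points differ instructively: the paper's argument hinges on the noetherian fact that $Y_\mathbf{p}$ is still a bounded-below complex of \emph{injective} $R_\mathbf{p}$-modules (so the localized Hom-complex still computes morphisms in $\mathcal{D}(R_\mathbf{p})$), whereas you replace this by the trivial observation that $P_\mathbf{p}$ is still a bounded-above complex of finite free modules, and instead spend the noetherian hypothesis on constructing $P$ (which is exactly Lemma \ref{lem.bounded f.p. homology}(1) for $\mathcal{G}=R-\text{Mod}$, $\mathcal{S}=\{R\}$). Your diagnosis of where both boundedness hypotheses enter is accurate: in your version they make each term of $\text{Hom}^\bullet_R(P,Y)$ a \emph{finite} product, which localization respects, while in the paper's version they provide the injective resolution of $Y$ and the finitely many stalks of $X$. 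One caution on your closing alternative: after the truncation d\'evissage, a finite free \emph{presentation} of $M$ only controls $\eta_{M[k]}$ in low cohomological degrees; to handle all shifts $k$ one needs a full free resolution (i.e., essentially your main argument) or a dimension-shifting induction, so that parenthetical shortcut is incomplete as stated, though your main proof is unaffected.
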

\begin{proof}
Let us fix $Y\in\mathcal{D}^+(R)$, which we consider to be a bounded below complex of injective $R$-modules.  For each $Z$ in $\mathcal{D}^b_{fg}(R)$, we denote by $\eta_Z$ the canonical map $\text{Hom}_{\mathcal{D}(R)}(Z,Y)_\mathbf{p}\longrightarrow\text{Hom}_{\mathcal{D}(R_\mathbf{p})}(Z_\mathbf{p},Y_\mathbf{p})$. We then consider the full subcategory $\mathcal{C}$ of $\mathcal{D}^b_{fg}(R)$ consisting of those $Z$ such that $\eta_{Z[k]}$ is an isomorphism, for all $k\in\mathbb{Z}$. It is clear that  $\mathcal{C}$ is a thick subcategory of $\mathcal{D}^b_{fg}(R)$. 

We claim that $M[0]\in\mathcal{C}$, for each finitely generated $R$-module $M$. Once this is proved, the proof will be finished. Indeed, we will conclude that $\mathcal{C}=\mathcal{D}^b_{fg}(R)$ since each $Z\in\mathcal{D}^b_{fg}(R)$ is  a finite iterated extension of the stalk complexes $H^{-k}(Z)[k]$, and each $H^{-k}(Z)$ is finitely generated.  Recall that $\text{Hom}_{\mathcal{D}(R)}(M[-k],Y)$ is the $k$-th homology module of the complex of $R$-modules $\text{Hom}_R(M,Y)$. Similarly, $\text{Hom}_{\mathcal{D}(R_\mathbf{p})}(M_\mathbf{p}[-k],Y_\mathbf{p})$ is the $k$-th homology module of the complex of $R_\mathbf{p}$-modules $\text{Hom}_{R_\mathbf{p}}(M_\mathbf{p},Y_\mathbf{p})$ since $Y_\mathbf{p}$ is a bounded below complex of injective $R_\mathbf{p}$-modules. The claim follows from the exactness of the localization at $\mathbf{p}$ and from the truth of the result when $Y$ is a module (see, e.g., \cite{Ku}[Proposition IV.1.10]). 
\end{proof}

\begin{lema} \label{lem.L(U)}
Let $R$ be connected, let $(\mathcal{U},\mathcal{U}^\perp [1])$ be a compactly generated t-structure in $\mathcal{D}(R)$ which restricts to $\mathcal{D}^b_{fg}(R)$, let $\mathcal{H}$ be its heart and let $U\in\mathcal{D}^{-}(R)\cap\mathcal{U}$ be a complex with finitely generated homology modules. Then $\tilde{H}(U)$ is in $\mathcal{H}\cap\mathcal{D}^b_{fg}(R)$.
\end{lema}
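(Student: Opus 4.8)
The plan is to compute $\tilde H(U)$ by means of a single intelligent truncation of $U$. Since $U\in\mathcal U$ we have $\tau_\mathcal{U}(U)=U$, so $\tilde H(U)=\tau^{\mathcal U^\perp[1]}(U)=L(U)$, the value of the left adjoint $L=\tilde H_{|\mathcal U}:\mathcal U\longrightarrow\mathcal H$ to the inclusion $\mathcal H\hookrightarrow\mathcal U$ (\cite[Lemma 3.1]{PS1}); what the argument actually uses is only that $\tilde H$ is cohomological and that $\tilde H(X)=0$ for every $X\in\mathcal U[1]$. The strategy is then: (i) produce an integer $m$ with $\mathcal D^{\leq m}(R)\subseteq\mathcal U$; (ii) for $k$ large, split $U$ by the truncation triangle $\tau^{\leq -k-1}U\longrightarrow U\longrightarrow\tau^{\geq -k}U\stackrel{+}{\longrightarrow}$, showing that the right-hand term lies in $\mathcal U\cap\mathcal D^b_{fg}(R)$ while $\tilde H$ annihilates $\tau^{\leq -k-1}U$ and its shift, so that $\tilde H(U)\cong\tilde H(\tau^{\geq -k}U)$; and (iii) conclude by Remark \ref{rem.tildeH on aisle and co-aisle}.

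The main obstacle is step (i), and this is exactly where connectedness of $R$ enters. By \cite[Theorem 3.11]{AJS} the compactly generated t-structure is $\mathcal U=\mathcal U_\phi$ for an sp-filtration $\phi$ of $\mathrm{Spec}(R)$, and since it restricts to $\mathcal D^b_{fg}(R)$ the filtration $\phi$ satisfies the weak Cousin condition (\cite[Corollary 4.5]{AJS}). I would set $\Phi:=\bigcup_{i\in\mathbb Z}\phi(i)$. Each $\phi(i)$ is stable under specialization, hence so is $\Phi$; and the weak Cousin condition forces $\Phi$ to be stable under generalization as well, since an immediate generalization of a prime of $\phi(i)$ lies in $\phi(i-1)\subseteq\Phi$ and all generalization chains are finite (heights in a noetherian ring are finite). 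A subset of $\mathrm{Spec}(R)$ closed under both specialization and generalization is a union of connected components, so connectedness gives $\Phi=\emptyset$ or $\Phi=\mathrm{Spec}(R)$. If $\Phi=\emptyset$, then $\mathcal U=0$ and the lemma is trivial. If $\Phi=\mathrm{Spec}(R)$, then the finitely many minimal primes $\mathbf q_1,\dots,\mathbf q_r$ all lie in a common $\phi(m)$ (take $m$ the minimum of their entry levels and use that $\phi$ is decreasing); as $\phi(m)$ is specialization-closed and every prime contains, hence specializes, a minimal one, we get $\phi(m)=\mathrm{Spec}(R)$, that is, $\mathcal D^{\leq m}(R)\subseteq\mathcal U$.

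For step (ii), fix $k\geq\max(0,-m)$. Then $\tau^{\leq -k-1}U\in\mathcal D^{\leq -k-1}\subseteq\mathcal D^{\leq m}\subseteq\mathcal U$, and since $U\in\mathcal U$ with $\mathcal U$ closed under extensions and positive shifts, the truncation triangle forces $\tau^{\geq -k}U\in\mathcal U$; as $U\in\mathcal D^-(R)$ has finitely generated homology, $\tau^{\geq -k}U$ is moreover homologically bounded with finitely generated homology, so $\tau^{\geq -k}U\in\mathcal U\cap\mathcal D^b_{fg}(R)$. On the other hand $\tau^{\leq -k-1}U$ and its shift both lie in $\mathcal D^{\leq m-1}=\mathcal D^{\leq m}[1]\subseteq\mathcal U[1]$, so $\tilde H(\tau^{\leq -k-1}U)=0=\tilde H(\tau^{\leq -k-1}U[1])$. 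Applying the cohomological functor $\tilde H$ to the triangle and reading off the segment $\tilde H^0(\tau^{\leq -k-1}U)\to\tilde H^0(U)\to\tilde H^0(\tau^{\geq -k}U)\to\tilde H^1(\tau^{\leq -k-1}U)$, whose outer terms vanish, yields $\tilde H(U)\cong\tilde H(\tau^{\geq -k}U)$.

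Finally, for step (iii): because the t-structure restricts to $\mathcal D^b_{fg}(R)$ and $\tau^{\geq -k}U\in\mathcal D^b_{fg}(R)$, Remark \ref{rem.tildeH on aisle and co-aisle} gives $\tilde H(\tau^{\geq -k}U)\in\mathcal H\cap\mathcal D^b_{fg}(R)$, whence $\tilde H(U)\in\mathcal H\cap\mathcal D^b_{fg}(R)$, as required. The only genuinely delicate point is the extraction of the lower bound in step (i); the rest is a standard manipulation of truncations together with the vanishing of $\tilde H$ on $\mathcal U[1]$.
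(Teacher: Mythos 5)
Your proof is correct and follows essentially the same route as the paper's: both use connectedness together with the weak Cousin condition to produce an integer $m$ with $\mathcal{D}^{\leq m}(R)\subseteq\mathcal{U}$, then replace $U$ by an intelligent truncation lying in $\mathcal{U}\cap\mathcal{D}^b_{fg}(R)$, to which the restriction hypothesis (via Remark \ref{rem.tildeH on aisle and co-aisle}) applies. The only cosmetic differences are that you re-derive the eventual fullness of the sp-filtration (the content of \cite[Corollary 4.8]{AJS}, cited by the paper) directly from the weak Cousin condition, handling the degenerate case $\mathcal{U}=0$ explicitly, and that you obtain $\tilde{H}(U)\cong\tilde{H}(\tau^{\geq -k}U)$ from the long exact sequence of the cohomological functor $\tilde{H}$, where the paper gets the same identification in the form $\tau^{\mathcal{U}^\perp}(U[-1])\cong\tau^{\mathcal{U}^\perp}(\tau^{>0}(U[-1]))$ via the octahedral axiom --- both arguments are sound.
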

\begin{proof}
 Let $\phi$ be the sp-filtration of $\text{Spec}(R)$ associated to $(\mathcal{U},\mathcal{U}^\perp [1])$. By \cite[Corollaries 4.5 and 4.8]{AJS}, we know that there exists some $j_0\in\mathbb{Z}$ such that $\phi (j_0)=\text{Spec}(R)$. Without loss of generality, we assume that $j_0=0$. We then have $\mathcal{D}^{\leq 0}(R)\subseteq\mathcal{U}$ and $\mathcal{H}\subseteq\mathcal{D}^{\geq 0}(R)$. By considering now for the object $U$ of the statement the canonical truncation triangle

\begin{center}
$\tau^{\leq 0}(U[-1])\longrightarrow U[-1]\stackrel{g}{\longrightarrow}\tau^{>0}(U[-1])\stackrel{+}{\longrightarrow}$
\end{center}
and applying the octhaedrom axiom, we see that $\tau^{\mathcal{U}^\perp}(g):\tau^{\mathcal{U}^\perp}(U[-1])\longrightarrow\tau^{\mathcal{U}^\perp}(\tau^{>0}(U[-1]))$ is an isomorphism. But the codomain of this morphism is in $\mathcal{D}^b_{fg}(R)$ since $\tau^{>0}(U[-1])\in\mathcal{D}^b_{fg}(R)$ and the t-structure $(\mathcal{U},\mathcal{U}^\perp [1])$ restricts to $\mathcal{D}^b_{fg}(R)$. Then $\tilde{H}(U)=\tau^{\mathcal{U}^\perp}(U[-1])[1]$ is in $\mathcal{D}^b_{fg}(R)$ (see \cite[Lemma 3.1]{PS1}). 
\end{proof}

We are now ready to prove the main result of the paper.

\begin{teor} \label{teor.main}
Let $R$ be a commutative Noetherian ring and let $(\mathcal{U},\mathcal{U}^\perp [1])$ be a compactly generated t-structure in $\mathcal{D}(R)$ which restricts to $\mathcal{D}_{fg}^b(R)$. The heart $\mathcal{H}$ of this t-structure is a locally coherent Grothendieck category where  $\mathcal{H}\cap\mathcal{D}_{fg}^b(R)$ is the subcategory of its finitely presented  objects. 
\end{teor}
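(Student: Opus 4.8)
The plan is to verify the four hypotheses of Proposition \ref{prop.locally coherent Grothendieck heart}, taking $\mathcal{G}=R-\text{Mod}$ with $\mathcal{S}=\{R\}$ (a single finitely presented, indeed compact, generator). Since $R$ is commutative noetherian, $\mathcal{G}$ is locally noetherian and hence locally coherent, and $fp(\mathcal{G})=R-\text{mod}$, so the target conclusion is exactly that $\mathcal{H}$ is locally coherent with $\mathcal{H}\cap\mathcal{D}^b_{fg}(R)=\text{fp}(\mathcal{H})$. Condition 1 is given by hypothesis. For condition 2, I would invoke \cite[Corollaries 4.5 and 4.8]{AJS} (used already in Lemma \ref{lem.L(U)}): because the compactly generated t-structure restricts to $\mathcal{D}^b_{fg}(R)$, its associated sp-filtration $\phi$ satisfies $\phi(j_0)=\text{Spec}(R)$ for some $j_0$ and is eventually empty, which translates into the two-sided sandwich $\mathcal{D}^{\leq m}(R)\subseteq\mathcal{U}\subseteq\mathcal{D}^{\leq n}(R)$ for suitable integers $m\leq n$.

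The substantive work lies in conditions 3 and 4. For condition 3, I must show $\mathcal{H}\cap\mathcal{D}^b_{fg}(R)$ is a generating class of $\mathcal{H}$. The natural strategy is: given $M\in\mathcal{H}$, write $M$ as a homotopy/Milnor colimit or as a direct limit of objects built from finitely generated pieces, and push these through the left truncation/cohomology functor. Concretely, since $M\in\mathcal{U}\subseteq\mathcal{D}^{\leq n}(R)$ has finitely generated homology realized as direct limits of finitely generated submodules, one approximates $M$ by objects $U\in\mathcal{D}^-(R)\cap\mathcal{U}$ with finitely generated homology and then applies Lemma \ref{lem.L(U)} to land $\tilde H(U)$ inside $\mathcal{H}\cap\mathcal{D}^b_{fg}(R)$. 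Here one likely reduces to the case $R$ connected (as Lemma \ref{lem.L(U)} assumes), handling the general noetherian ring by its finite decomposition into connected components, since $R$ noetherian has finitely many idempotents and $\mathcal{D}(R)$ decomposes accordingly; alternatively the connectedness hypothesis may be removable because the only role of $j_0$ is to bound the filtration, which \cite{AJS} guarantees in general under restriction.

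For condition 4, I need the commutation $\varinjlim\text{Hom}_{\mathcal{D}(R)}(R[k],M_i)\xrightarrow{\cong}\text{Hom}_{\mathcal{D}(R)}(R[k],\varinjlim_\mathcal{H}M_i)$ for every direct system in $\mathcal{H}$. Since $\text{Hom}_{\mathcal{D}(R)}(R[k],X)\cong H^{-k}(X)$ as $R$-modules, and $R$ is noetherian, this reduces to comparing homology of a direct limit in $\mathcal{H}$ against the direct limit of homologies. The key input is that $R$ is finitely presented (in fact compact) in $\mathcal{D}(R)$, so Hom out of $R[k]$ commutes with coproducts; combined with condition 2 giving a uniform homological bound on $\mathcal{H}$, and the fact that direct limits in $\mathcal{H}$ are computed via the cohomological functor $\tilde H$ applied to coproducts, one checks $\eta_{R[k]}$ is an isomorphism. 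This is where I expect the localization tool, Lemma \ref{lem.Hom under localization}, to enter: one verifies the isomorphism stalkwise at each $\mathbf{p}\in\text{Spec}(R)$, where the t-structure and its heart localize compatibly, reducing to a local noetherian ring where the finiteness arguments are cleaner.

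The main obstacle will be condition 4, and specifically controlling how direct limits in the abelian category $\mathcal{H}$ relate to operations in $\mathcal{D}(R)$: a direct limit in $\mathcal{H}$ is \emph{not} simply the direct limit computed in $\mathcal{D}(R)$, so one cannot naively commute $\text{Hom}(R[k],-)$ past it. The resolution is to realize $\varinjlim_\mathcal{H}M_i$ as $\tilde H$ of an honest coproduct (or Milnor colimit) in $\mathcal{D}(R)$, use compactness of $R$ to move Hom inside the coproduct, and then use the uniform boundedness from condition 2 together with the localization isomorphism of Lemma \ref{lem.Hom under localization} to identify the resulting homology modules; the noetherian hypothesis is essential here to guarantee that the relevant Ext-type comparison maps (as in Proposition \ref{prop.monicExt for locally f.p.}) are isomorphisms rather than merely monomorphisms.
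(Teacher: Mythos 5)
Your plan founders at condition 2 of Proposition \ref{prop.locally coherent Grothendieck heart}, and this is a genuine gap, not a technicality. You claim that because the compactly generated t-structure restricts to $\mathcal{D}^b_{fg}(R)$, the associated sp-filtration $\phi$ is both eventually full \emph{and eventually empty}, yielding the two-sided sandwich $\mathcal{D}^{\leq m}(R)\subseteq\mathcal{U}\subseteq\mathcal{D}^{\leq n}(R)$. What the restriction hypothesis actually gives (via the weak Cousin condition, \cite[Corollary 4.5]{AJS}) is only the lower bound: $\phi(i)=\text{Spec}(R)$ for $i\ll 0$ when $R$ is connected, hence $\mathcal{D}^{\leq m}(R)\subseteq\mathcal{U}$ and $\mathcal{H}\subseteq\mathcal{D}^{\geq m}(R)$. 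Eventual triviality of $\phi$ (i.e.\ $\phi(i)=\emptyset$ for some $i$, which is what gives $\mathcal{U}\subseteq\mathcal{D}^{\leq n}(R)$) follows from \cite[Corollary 4.8]{AJS} only when the Krull dimension is \emph{finite}, and a commutative noetherian ring can have infinite Krull dimension (Nagata). Without that upper bound, heart objects need not lie in any uniform $\mathcal{D}^{[m,n]}(R)$, and the Proposition cannot be invoked: its proof constructs compact approximations $P\in\mathcal{C}^b(\text{sum}(\mathcal{S}))$ precisely by exploiting $\mathcal{H}\subseteq\mathcal{D}^{[m,n]}(\mathcal{G})$. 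Note also that your verification of condition 4 leans on this same "uniform boundedness", so it collapses with it.

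This is exactly why the paper's proof is structured in three steps rather than as a single application of the Proposition. Step 2 carries out your argument verbatim \emph{under the additional hypothesis that $\phi$ is eventually trivial} (there conditions 1--4 do hold, with condition 4 supplied by \cite[Theorem 4.9]{PS3} and condition 3 by Step 1, which matches your use of Lemma \ref{lem.L(U)}). The general case (Step 3) abandons the Proposition: one proves directly that each $X\in\mathcal{H}\cap\mathcal{D}^b_{fg}(R)$ is finitely presented in $\mathcal{H}$ by localizing the comparison map $\eta_X$ at each prime $\mathbf{p}$ -- using Lemma \ref{lem.Hom under localization} and \cite[Proposition 3.11]{PS3} to identify $(\eta_X)_\mathbf{p}$ with $\eta_{X_\mathbf{p}}$ -- and then invoking the eventually trivial case over $R_\mathbf{p}$, which is legitimate because $R_\mathbf{p}$ \emph{does} have finite Krull dimension; since kernel and cokernel of $\eta_X$ have empty support, $\eta_X$ is an isomorphism. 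One then concludes with the generators of Step 1 and Lemma \ref{lem.restricted heart} in place of the Proposition. Your instinct to use Lemma \ref{lem.Hom under localization} stalkwise is the right one, but it must replace the global application of Proposition \ref{prop.locally coherent Grothendieck heart}, not feed into it.
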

\begin{proof}
All throughout the proof, without loss of generality, we assume that $R$ is connected. Remember that then the associated sp-filtration $\phi$ satisfies the weak Cousin condition and, hence, has the property that $\phi (i)=\text{Spec}(R)$, for $i\ll 0$ (see \cite{AJS}[Theorem 4.4 and Corollary 4.8]). This in turn implies that $\mathcal{H}=\mathcal{H}_\phi\subseteq\mathcal{D}^{\geq m}(R)$, for some $m\in\mathbb{Z}$. Moreover, by \cite[Theorem 4.10]{PS3}, we know that $\mathcal{H}=\mathcal{H}_\phi$ is a Grothendieck category.  

{\it Step 1: $\mathcal{H}\cap\mathcal{D}^b_{fg}(R)$ is a (skeletally small) class of generators of $\mathcal{H}$:}  Let $\mathcal{U}'$ denote the full subcategory of $\mathcal{U}$ consisting of complexes in $\mathcal{U}\cap\mathcal{D}^-(R)$ which have finitely generated homology modules. Each object of $\mathcal{U}'$ is isomorphic in $\mathcal{D}(R)$ to a bounded above complex of finitely generated $R$-modules. Let $L=\tilde{H}_{|\mathcal{U}}:\mathcal{U}\longrightarrow\mathcal{H}$ be the left adjoint to the inclusion functor $\mathcal{H}\hookrightarrow\mathcal{U}$. A slight modification of the proof of \cite[Proposition 3.10]{PS3} shows that $\mathcal{X}:=L(\mathcal{U}')$ is a skeletally small class of generators of $\mathcal{H}$. By lemma \ref{lem.L(U)}, we get that $\mathcal{X}\subseteq\mathcal{H}\cap\mathcal{D}_{fg}^b(R)$, which ends this first step. 

{\it Step 2: the result is true when $\phi$ is eventually trivial (i.e. when $\phi (i)=\emptyset$, for some $i\in\mathbb{Z}$):}  We shall check all conditions 1-4 of Proposition \ref{prop.locally coherent Grothendieck heart}. Without loss of generality, we assume that the filtration is

$$\text{Spec}(R)=\cdots \phi (-n-1)=\phi (-n)\supsetneq\phi (-n+1)\supseteq \cdots \supseteq\phi (0)\supsetneq\phi (1)=\phi (2)=\cdots =\emptyset, \hspace*{1cm} (*) $$
in which case we have that $\mathcal{D}^{\leq -n}(R)\subseteq\mathcal{U}\subseteq\mathcal{D}^{\leq 0}(R)$ and $\mathcal{H}=\mathcal{H}_\phi\subseteq\mathcal{D}^{[-n,0]}(R)$ (see \cite[Lemma 4.1]{PS3}). Then condition 2 of Proposition \ref{prop.locally coherent Grothendieck heart} holds and condition 1 holds by hypothesis. Moreover, step 1 of this proof gives condition 3 of that proposition.
Finally, bearing in mind that we have a natural isomorphism $H^k\cong\text{Hom}_{\mathcal{D}(R)}(R[-k],?)$ of functors $\mathcal{D}(R)\longrightarrow R-\text{Mod}$,  by taking $\mathcal{S}=\{R\}$ and using \cite[Theorem 4.9]{PS3} we also get that  condition 4 holds. 

{\it Step 3: The general case}. The proof reduces to check that $\mathcal{H}\cap\mathcal{D}^b_{fg}(R)\subseteq fp(\mathcal{H})$. Indeed, if this is proved, then step 1 implies that $\mathcal{H}$ is locally finitely presented  and that each object in $fp(\mathcal{H})$ is the cokernel of a morphism in $\mathcal{H}\cap\mathcal{D}^b_{fg}(R)$. It will follow from 
 Lemma \ref{lem.restricted heart} that  $fp(\mathcal{H})=\mathcal{H}\cap\mathcal{D}^b_{fg}(R)$ and that this is an abelian exact subcategory of $\mathcal{H}$. That is, $\mathcal{H}$ will be a locally coherent Grothendieck category with $\mathcal{H}\cap\mathcal{D}^b_{fg}(R)$ as its class of finitely presented objects. 

We then prove the inclusion $\mathcal{H}\cap\mathcal{D}^b_{fg}(R)\subseteq fp(\mathcal{H})$. Let $(M_i)_{i\in I}$ be a direct system in $\mathcal{H}$ and let $X\in\mathcal{H}\cap\mathcal{D}^b_{fg}(R)$ be any object. We consider the canonical morphism 
$$\eta_{X}:\varinjlim\text{Hom}_{\mathcal{D}(R)}(X,M_i)\longrightarrow\text{Hom}_{\mathcal{D}(R)}(X,\varinjlim_\mathcal{H}M_i), $$ which is a morphism in $\text{R}-\text{Mod}$. Localization at any prime ideal $\mathbf{p}$ preserves direct limits and, by \cite{PS3}[Proposition 3.11], we also have that $(\varinjlim_\mathcal{H}M_i)_\mathbf{p}\cong\varinjlim_{\mathcal{H}_\mathbf{p}}(M_i)_\mathbf{p}$. Here if $\mathcal{H}=\mathcal{H}_\phi$, then we put $\mathcal{H}_\mathbf{p}=\mathcal{H}_{\phi_\mathbf{p}}$, using the terminology of \cite{PS3}. Therefore, using Lemma \ref{lem.Hom under localization},  we can identify $(\eta_X)_{\mathbf{p}}:(\varinjlim\text{Hom}_{\mathcal{D}(R)}(X,M_i))_\mathbf{p}\longrightarrow (\text{Hom}_{\mathcal{D}(R)}(X,\varinjlim_\mathcal{H}M_i))_\mathbf{p}$ with the canonical morphism $$\eta_{X_\mathbf{p}}:\varinjlim\text{Hom}_{\mathcal{D}(R_\mathbf{p})}(X_\mathbf{p},(M_i)_\mathbf{p})\longrightarrow\text{Hom}_{\mathcal{D}(R_\mathbf{p})}(X_\mathbf{p},\varinjlim_{\mathcal{H}_\mathbf{p}}(M_i)_\mathbf{p}). $$ But the sp-filtration $\phi_\mathbf{p}$ of $\text{Spec}(R_\mathbf{p})$ also satisfies the weak Cousin condition and, since $R_\mathbf{p}$ has finite Krull dimension, we get that $\phi_\mathbf{p}$ is eventually trivial (see \cite{AJS}[Corollary 4.8]). The truth of the theorem when the associated filtration is eventually trivial implies that $\eta_{X_\mathbf{p}}$ is an isomorphism, for all $\mathbf{p}\in\text{Spec}(R)$, because $X_\mathbf{p}\in\text{fp}(\mathcal{H}_\mathbf{p})$. Therefore the kernel and cokernel of $\eta_X$ are $R$-modules with empty support. Then they are both zero, so that $\eta_X$ is an isomorphism, and hence $X$ is in $\text{fp}(\mathcal{H})$ as desired. 
\end{proof}

\begin{cor} \label{cor.realisation}
Let $R$ be a commutative noetherian ring. The heart of any t-structure in $\mathcal{D}^b_{fg}(R)$ is equivalent to the category of finitely presented objects of a locally coherent Grothendieck category. 
\end{cor}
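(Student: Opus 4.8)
The plan is to deduce this directly from Theorem \ref{teor.main} by realising any t-structure on $\mathcal{D}^b_{fg}(R)$ as the restriction of a compactly generated t-structure on all of $\mathcal{D}(R)$. The first, purely formal, ingredient is the observation that if a t-structure $(\mathcal{U},\mathcal{U}^\perp[1])$ in $\mathcal{D}(R)$ with heart $\mathcal{H}$ restricts to $\mathcal{D}^b_{fg}(R)$ in the sense of Definition \ref{def.restricted t-structure}, then the heart of the restricted t-structure $(\mathcal{U}\cap\mathcal{D}^b_{fg}(R),(\mathcal{U}^\perp\cap\mathcal{D}^b_{fg}(R))[1])$ is exactly $\mathcal{H}\cap\mathcal{D}^b_{fg}(R)$; this is immediate once one writes the heart of the ambient t-structure as $\mathcal{U}\cap\mathcal{U}^\perp[1]$ and uses that $\mathcal{D}^b_{fg}(R)[1]=\mathcal{D}^b_{fg}(R)$.

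The substantial ingredient is the claim that every t-structure $\tau$ in $\mathcal{D}^b_{fg}(R)$ arises in this way. Here I would appeal to the classification in \cite{AJS}: for a commutative noetherian ring $R$, the assignment $\phi\rightsquigarrow\tau_\phi$ restricts to a bijection between the sp-filtrations of $\text{Spec}(R)$ satisfying the weak Cousin condition and the t-structures in $\mathcal{D}^b_{fg}(R)$, where $\tau_\phi$ denotes the restriction to $\mathcal{D}^b_{fg}(R)$ of the compactly generated t-structure $(\mathcal{U}_\phi,\mathcal{U}_\phi^\perp[1])$ in $\mathcal{D}(R)$ determined by $\phi$ (combine \cite[Theorem 3.11]{AJS} with \cite[Corollary 4.5]{AJS}). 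Thus there is an sp-filtration $\phi$ with the weak Cousin condition such that $\tau$ is the restriction of the compactly generated t-structure $(\mathcal{U}_\phi,\mathcal{U}_\phi^\perp[1])$, and this ambient t-structure restricts to $\mathcal{D}^b_{fg}(R)$ by construction.

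With these two ingredients the corollary follows at once. By Theorem \ref{teor.main}, the heart $\mathcal{H}=\mathcal{H}_\phi$ of $(\mathcal{U}_\phi,\mathcal{U}_\phi^\perp[1])$ is a locally coherent Grothendieck category on which $\text{fp}(\mathcal{H})=\mathcal{H}\cap\mathcal{D}^b_{fg}(R)$. By the first ingredient, the heart of $\tau$ equals $\mathcal{H}\cap\mathcal{D}^b_{fg}(R)$, hence it is precisely $\text{fp}(\mathcal{H})$, the category of finitely presented objects of the locally coherent Grothendieck category $\mathcal{H}$, as claimed.

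The main obstacle is the classification step, that is, showing that an a priori arbitrary t-structure on $\mathcal{D}^b_{fg}(R)$ is already of the supported form $\tau_\phi$ and hence extends to $\mathcal{D}(R)$. One cannot argue naively by taking the aisle in $\mathcal{D}(R)$ generated by the given bounded aisle, because the objects of $\mathcal{D}^b_{fg}(R)$ need not be compact in $\mathcal{D}(R)$ (they are perfect only when $R$ is regular); the force of \cite{AJS} is exactly that the sp-filtration recovers the t-structure through genuine compact generators. Everything else in the argument is formal.
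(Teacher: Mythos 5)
Your proposal is correct and takes essentially the same route as the paper: realise the given t-structure in $\mathcal{D}^b_{fg}(R)$ as the restriction of a compactly generated t-structure $\tau_\phi$ in $\mathcal{D}(R)$ via the classification of \cite{AJS}, apply Theorem \ref{teor.main}, and use the formal fact that the heart of the restricted t-structure is $\mathcal{H}\cap\mathcal{D}^b_{fg}(R)$. One caveat on the citation: the full bijection you invoke between weak-Cousin sp-filtrations and t-structures in $\mathcal{D}^b_{fg}(R)$ is only established in \cite{AJS} when $R$ admits a dualizing complex (sufficiency of the weak Cousin condition is open in general), but your argument only uses the unconditionally true half --- every t-structure in $\mathcal{D}^b_{fg}(R)$ is the restriction of some $\tau_\phi$ --- which is \cite[Corollary 3.12]{AJS}, precisely the result the paper cites.
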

\begin{proof}
Each t-structure  in $\mathcal{D}^b_{fg}(R)$ is the restriction of the t-structure $\tau_\phi$ in $\mathcal{D}(R)$ associated to an sp-filtration (see \cite[Corollary 3.12]{AJS}). The result is then an immediate consequence of last theorem, using \cite[Theorem 3.10]{AJS}.
\end{proof}

As a final comment, we give the geometric translation of last theorem and corollary:

\begin{cor} \label{cor.geometric translation}
Let $\mathbb{X}$ be an affine noetherian scheme and let $(\mathcal{U},\mathcal{U}^\perp [1])$ be a t-structure in $\mathcal{D}(\mathbb{X}):=\mathcal{D}(\text{Qcoh}(\mathbb{X}))$ which restricts to $\mathcal{D}^b_{coh}(\mathbb{X})\cong D^b(\text{coh}(\mathbb{X}))$. The heart $\mathcal{H}$ of the t-structure is a locally coherent Grothendick category on which $\mathcal{H}\cap\mathcal{D}^b_{coh}(\mathbb{X})$ is the class of finitely presented objects. In particular, the heart of each t-structure in $\mathcal{D}^b(\text{coh}(\mathbb{X}))$ is equivalent to the category of finitely presented objects of a locally coherent Grothendieck category. 
\end{cor}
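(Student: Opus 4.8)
The plan is to reduce everything to the algebraic statements already proved, since an affine noetherian scheme is nothing more than the spectrum of a commutative noetherian ring dressed in geometric language. First I would write $\mathbb{X}=\text{Spec}(R)$ with $R$ commutative noetherian and invoke the classical affine equivalence $\text{Qcoh}(\mathbb{X})\simeq R-\text{Mod}$ given by the global-sections and associated-sheaf functors, which restricts to an equivalence $\text{coh}(\mathbb{X})\simeq R-\text{mod}$ precisely because $R$ is noetherian. These are exact equivalences of abelian categories, hence they induce triangulated equivalences $\mathcal{D}(\mathbb{X})\simeq\mathcal{D}(R)$ and $\mathcal{D}^b(\text{coh}(\mathbb{X}))\simeq\mathcal{D}^b(R-\text{mod})$, and the latter is compatible with the embeddings into the unbounded derived categories, i.e. it matches $\mathcal{D}^b_{coh}(\mathbb{X})$ with $\mathcal{D}^b_{fg}(R)$ as full triangulated subcategories (using the identification $\mathcal{D}^b(fp(\mathcal{G}))\simeq\mathcal{D}^b_{fp}(\mathcal{G})$ established via Lemma \ref{lem.bounded f.p. homology}).

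Under this dictionary the given t-structure $(\mathcal{U},\mathcal{U}^\perp[1])$ transports to a t-structure in $\mathcal{D}(R)$. Since the equivalence preserves coproducts and therefore compact objects, the transported t-structure is again compactly generated (as intended in Theorem \ref{teor.main} and in the abstract), and the restriction hypothesis transports verbatim, so it restricts to $\mathcal{D}^b_{fg}(R)$. The equivalence carries the heart of the original t-structure to the heart of the transported one, and carries $\mathcal{H}\cap\mathcal{D}^b_{coh}(\mathbb{X})$ to $\mathcal{H}\cap\mathcal{D}^b_{fg}(R)$. Now I would apply Theorem \ref{teor.main} directly: it asserts that the transported heart is a locally coherent Grothendieck category whose class of finitely presented objects is exactly $\mathcal{H}\cap\mathcal{D}^b_{fg}(R)$. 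Transporting the conclusion back along the equivalence gives the first assertion.

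For the final `in particular', I would start from an arbitrary t-structure in $\mathcal{D}^b(\text{coh}(\mathbb{X}))$ and transport it to one in $\mathcal{D}^b_{fg}(R)$; this is precisely the situation of Corollary \ref{cor.realisation}. Concretely, every t-structure in $\mathcal{D}^b_{fg}(R)$ is the restriction of a compactly generated t-structure $\tau_\phi$ in $\mathcal{D}(R)$ (by \cite[Corollary 3.12]{AJS}), so its heart is $\mathcal{H}_\phi\cap\mathcal{D}^b_{fg}(R)$, which by the first part equals $fp(\mathcal{H}_\phi)$ for the locally coherent Grothendieck category $\mathcal{H}_\phi$; transporting back yields the geometric conclusion. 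The corollary is therefore a pure translation, and the only points needing care --- the mild obstacle --- are bookkeeping: verifying that the affine equivalence matches the bounded subcategories as full triangulated subcategories of the unbounded derived categories, so that `restricts to' means the same thing on both sides, and that compact generation is preserved under the equivalence. All the genuine content already resides in Theorem \ref{teor.main} and Corollary \ref{cor.realisation}.
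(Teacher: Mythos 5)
Your proposal is correct and coincides with the paper's own (implicit) argument: the paper states this corollary as the ``geometric translation'' of Theorem \ref{teor.main} and Corollary \ref{cor.realisation}, with exactly the transport along the affine equivalence $\text{Qcoh}(\mathbb{X})\simeq R-\text{Mod}$, $\text{coh}(\mathbb{X})\simeq R-\text{mod}$ that you spell out. Your added bookkeeping (matching of the bounded subcategories inside the unbounded derived categories, preservation of compact generation, and reading the hypothesis as ``compactly generated'' in accordance with the abstract and Theorem \ref{teor.main}) is exactly what makes the translation legitimate.
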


\end{document}